 \newtheorem{theorem}{Theorem}[section]
 \newtheorem{corollary}[theorem]{Corollary}
 \newtheorem{lemma}[theorem]{Lemma}
 \newtheorem{proposition}[theorem]{Proposition}
\newcommand{\C}{\mathbb{C}}
\begin{document}

\title[Simple exceptional groups of Lie type]{\bf Simple exceptional groups of Lie type are determined by  their character degrees }

\author{Hung P. Tong-Viet}
\email{Tong-Viet@ukzn.ac.za}
\address{School of Mathematical Sciences,
University of KwaZulu-Natal\\
Pietermaritzburg 3209, South Africa}

\date{\today}
\keywords{character degrees, simple exceptional  group}
\subjclass[2000]{Primary 20C15}
\thanks{The author is supported by a post-doctoral fellowship from the
University of KwaZulu-Natal}

\begin{abstract} Let $G$ be a finite group. Denote by $\textrm{Irr}(G)$ the
set of all irreducible complex characters of $G.$ Let $\textrm{cd}(G)=\{\chi(1)\;|\;\chi\in
\textrm{Irr}(G)\}$ be the set of all irreducible complex character degrees of $G$ forgetting
multiplicities, and let $\textrm{X}_1(G)$ be the set of all irreducible complex character degrees
of $G$ counting multiplicities. Let $H$ be any non-abelian simple exceptional group of Lie type. In
this paper, we will show that if $S$ is a non-abelian simple group and $\textrm{cd}(S)\subseteq
\textrm{cd}(H)$ then $S$ must be isomorphic to $H.$ As a consequence, we show that if $G$ is a
finite group with $\textrm{X}_1(G)\subseteq \textrm{X}_1(H)$ then $G$ is isomorphic to $H.$ In
particular, this implies that the simple exceptional groups of Lie type are uniquely determined by
the structure of their complex group algebras.
\end{abstract}

\subjclass{Primary 20C15 }

\keywords{character degree, simple exceptional group}

\date{\today}

\maketitle
\section{Introduction and Notation}
All groups considered are finite and all characters are complex characters. Let $G$ be a group.
Denote by $\textrm{Irr}(G)$ the set of all irreducible characters of $G.$ Let $\textrm{cd}(G)$ be
the set of all irreducible character degrees of $G$ forgetting multiplicities, that is,
$\textrm{cd}(G)=\{\chi(1)\;|\;\chi\in \textrm{Irr}(G)\},$ and let $\textrm{X}_1(G)$ be the set of
all irreducible character degrees of $G$ counting multiplicities. Observe that $\textrm{X}_1(G)$ is
just the first column of the ordinary character table of $G.$ We follow \cite{atlas} for notation
of non-abelian simple groups. In this paper, we will prove the following results.

\begin{theorem}\label{main} Let $H$ be a simple exceptional group of Lie type.
 If $S$ is a non-abelian simple group and  $\emph{\textrm{cd}}(S)\subseteq \emph{\textrm{cd}}(H),$
then $S\cong H.$
\end{theorem}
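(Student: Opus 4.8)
The plan is to argue by contradiction: suppose $S$ is a non-abelian simple group with $\mathrm{cd}(S)\subseteq\mathrm{cd}(H)$ but $S\not\cong H$, and derive a contradiction by a case analysis on the isomorphism type of $S$ in the Classification of Finite Simple Groups. The fundamental tool is the observation that $\mathrm{cd}(S)\subseteq\mathrm{cd}(H)$ forces strong arithmetic constraints: every character degree of $S$ divides $|H|$, the largest degree in $\mathrm{cd}(S)$ is bounded by the largest degree in $\mathrm{cd}(H)$ (which for an exceptional group $H$ of Lie type in characteristic $p$ is essentially the Steinberg degree $|H|_p$, the order of a Sylow $p$-subgroup), and the prime divisors of degrees in $\mathrm{cd}(S)$ must appear among those of degrees in $\mathrm{cd}(H)$. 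So the first step is to record, for each family of exceptional groups $H$ (the types $G_2(q)$, $F_4(q)$, $E_6(q)$, ${}^2E_6(q)$, $E_7(q)$, $E_8(q)$, ${}^3D_4(q)$, ${}^2B_2(q)$, ${}^2G_2(q)$, ${}^2F_4(q)$), the relevant data from the known generic character tables / Deligne--Lusztig theory: the Steinberg degree, the unipotent degrees, the smallest nontrivial degree, and in particular the $p$-part structure.

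The case analysis then splits according to whether $S$ is alternating, sporadic, or of Lie type. If $S=A_n$ is alternating, I would use that $A_n$ has a character of degree roughly $2^{(n-1)/2}$ or the standard $(n-1)$-dimensional degree, together with the fact that consecutive-ish degrees of $A_n$ (e.g.\ from hook-length considerations) cannot all simultaneously divide $|H|$ unless $n$ is small, reducing to finitely many alternating groups checked by hand against the ATLAS. If $S$ is sporadic, there are only finitely many groups, and one compares degree sets directly. The bulk of the work is when $S$ is itself of Lie type, say in characteristic $r$: here the key dichotomy is whether $r=p$ (same characteristic as $H$) or $r\neq p$. When $r=p$, the Steinberg degree of $S$ is $|S|_p$, which must divide the largest degree of $\mathrm{cd}(H)$, i.e.\ $|S|_p \le |H|_p$ and more precisely $|S|_p$ must be an element of $\mathrm{cd}(H)$ up to the combinatorics of unipotent degrees; this pins down the possibilities for the Lie rank and the defining field of $S$ very tightly. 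When $r\neq p$, I would play off the largest degree $\mathrm{St}_H = |H|_p$ being coprime to $r$ against the requirement that $\mathrm{cd}(S)$ contains the $r$-power Steinberg degree of $S$, forcing $|S|_r \mid |H|$ with $|S|_r$ supported on primes dividing non-Steinberg degrees of $H$; combined with a lower bound on the smallest nontrivial degree of $S$ (the minimal degree of a projective representation, à la Landazuri--Seitz--Tiep) sandwiched below the smallest nontrivial degree appearing in $\mathrm{cd}(H)$, this leaves only finitely many $(S,H)$ pairs.

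The main obstacle I anticipate is the same-characteristic Lie-type subcase, and especially distinguishing $H$ from other exceptional (or classical) groups of Lie type over the same or a related field whose orders and degree sets are numerically very close --- for instance separating $B_n(q)$ from $C_n(q)$ phenomena at the level of degrees, or separating $E_6(q)$ from $D_n$-type or $F_4$-type groups sharing many unipotent degrees, or dealing with the small-rank twisted groups ${}^2B_2$, ${}^2G_2$, ${}^2F_4$ where the degree sets are unusually sparse. Here one cannot rely on crude divisibility alone; I would need to exploit finer invariants, such as the precise set of unipotent degrees (which are ``generic'' polynomials in $q$ and hence recognizable), the specific structure of the second-smallest degree, or the presence/absence of a degree equal to a prime power, to force $S\cong H$. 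A secondary but manageable obstacle is the bookkeeping: ensuring the finitely many residual pairs left after the asymptotic arguments are genuinely eliminated, which I would do by explicit consultation of character-degree data (ATLAS, Lübeck's tables of generic degrees).
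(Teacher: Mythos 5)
Your overall architecture (CFSG trichotomy, Landazuri--Seitz--Zalesskii minimal-degree bounds, largest-degree and $\pi$-containment constraints, and the equal/cross characteristic dichotomy for $S$ of Lie type) matches the paper's, but the proposal has a genuine gap at the point where all the real work happens: both Lie-type cases in the paper are driven by the Malle--Zalesskii classification of irreducible characters of prime power degree, which is absent from your plan. In cross characteristic the paper does not run a divisibility/minimal-degree finiteness argument at all: it observes that $|S|_s\in\textrm{cd}(S)\subseteq\textrm{cd}(H)$ is a nontrivial $s$-power with $s\neq p$, while $\textrm{cd}(H)$ also contains the Steinberg degree $|H|_p$, so $H$ would have two distinct nontrivial prime power degrees; by Malle--Zalesskii this forces $H=G_2(3)$, and a check against the list of simple groups with $3$ or $4$ prime divisors finishes the case in a few lines, uniformly in $q$. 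Your substitute argument is both directionally confused ($d_1(S)\geq d_1(H)$ bounds $q$ in terms of $S$, not $S$ in terms of $H$; to bound $S$ you need $b(S)\leq b(H)$, which you never invoke) and does not justify the claim that only finitely many pairs $(S,H)$ survive, since $H$ ranges over infinite families and the elimination must be uniform in $q$.

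In equal characteristic the same missing lemma undermines your key step. You infer $|S|_p\leq|H|_p$ from the premise that the largest degree of $H$ is ``essentially the Steinberg degree $|H|_p$''; that premise is false as stated (for example Seitz's bound for $E_8(q)$ is $q^{128}$ while $|H|_p=q^{120}$, and the paper's Table 5 bounds all exceed $|H|_p$), so the inequality does not follow, and in any case an inequality is too weak to launch the analysis. What the paper actually uses is that the Steinberg character is the \emph{only} nontrivial $p$-power degree of $H$ (Malle--Zalesskii again), so $|S|_p\in\textrm{cd}(H)$ forces the exact equality $|S|_p=|H|_p$; this yields the Diophantine conditions on rank and field size that, combined with $p$-parts of specific unipotent degrees (the paper's Table 3, from L\"ubeck's data) and Zsigmondy primitive prime divisors, eliminate every $S\not\cong H$, including the delicate small-rank twisted cases you flag. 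You do gesture at ``presence/absence of a prime power degree'' as a finer invariant, but without the classification theorem that gesture cannot be turned into the equality $|S|_p=|H|_p$ or into the cross-characteristic elimination, so as it stands the proposal does not close either Lie-type case.
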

\begin{corollary}\label{main1} Let $H$ be a simple exceptional group of Lie type.
If  $G$ is a perfect group, $\emph{\textrm{cd}(G)}\subseteq \emph{\textrm{cd}}(H)$ and $|G|\leq
|H|,$ then $G\cong H.$
\end{corollary}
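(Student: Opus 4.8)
The plan is to deduce Corollary~\ref{main1} from Theorem~\ref{main} by producing a non-abelian simple quotient of $G$ and then using the order bound to eliminate its kernel. We may assume $G\neq 1$. Being a nontrivial finite group, $G$ has a maximal normal subgroup $M\trianglelefteq G$, so $G/M$ is simple. Since quotients of perfect groups are perfect, $G/M$ is perfect, and hence cannot be cyclic of prime order; therefore $G/M$ is a non-abelian simple group.

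Next I would push the degree hypothesis down to this quotient: each irreducible character of $G/M$ inflates to an irreducible character of $G$ of the same degree, so $\textrm{cd}(G/M)\subseteq\textrm{cd}(G)\subseteq\textrm{cd}(H)$. Applying Theorem~\ref{main} with $S=G/M$ yields $G/M\cong H$, and in particular $|G/M|=|H|$. The order hypothesis now closes the argument, since $|M|=|G|/|G/M|=|G|/|H|\leq 1$, forcing $M=1$ and $G=G/M\cong H$.

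All the substance here is carried by Theorem~\ref{main}; the corollary itself presents no real obstacle. The only points that call for a word of justification are the standard facts that quotients of perfect groups are perfect (which is what guarantees the simple quotient is non-abelian, so that Theorem~\ref{main} applies) and that inflation of characters gives the inclusion $\textrm{cd}(G/M)\subseteq\textrm{cd}(G)$, together with the harmless exclusion of the degenerate case $G=1$.
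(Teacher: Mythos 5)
Your proposal is correct and follows essentially the same route as the paper: take a maximal normal subgroup $N$ of the perfect group $G$, note $G/N$ is non-abelian simple with $\textrm{cd}(G/N)\subseteq\textrm{cd}(H)$ by inflation, apply Theorem \ref{main} to get $G/N\cong H$, and use $|G|\leq|H|$ to force $N=1$. The only cosmetic difference is that you explicitly record the standard facts (quotients of perfect groups are perfect, inflation preserves degrees) and set aside the trivial case $G=1$, which the paper leaves implicit.
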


\begin{corollary}\label{main2} Let $H$ be a simple exceptional group of Lie type. If $G$ is a group
and $\emph{\textrm{X}}_1(G)\subseteq \emph{\textrm{X}}_1(H),$ then $G\cong H.$
\end{corollary}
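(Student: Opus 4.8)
The plan is to deduce Corollary~\ref{main2} from Corollary~\ref{main1}, so the real work is only to extract from the hypothesis $\textrm{X}_1(G)\subseteq\textrm{X}_1(H)$ the two inputs that Corollary~\ref{main1} requires: that $G$ is perfect and that $|G|\le|H|$. Throughout I read $\textrm{X}_1(G)\subseteq\textrm{X}_1(H)$ as containment of multisets, i.e. every value occurs in $\textrm{X}_1(G)$ with multiplicity at most its multiplicity in $\textrm{X}_1(H)$; as usual we may assume $G\neq 1$.

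First I would show that $G$ is perfect. The number of times the degree $1$ occurs in $\textrm{X}_1(G)$ is exactly the number of linear characters of $G$, namely $|G:G'|$. Since $H$ is a non-abelian simple group we have $H=H'$, so the degree $1$ occurs precisely once in $\textrm{X}_1(H)$. The multiset containment then forces $|G:G'|\le 1$, hence $G=G'$.

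Next, comparing sums of squares of the entries of the two multisets gives
\[
|G|=\sum_{\chi\in\textrm{Irr}(G)}\chi(1)^2=\sum_{d\in\textrm{X}_1(G)}d^2\ \le\ \sum_{d\in\textrm{X}_1(H)}d^2=|H|.
\]
Moreover $\textrm{cd}(G)\subseteq\textrm{cd}(H)$ holds trivially, being just the underlying-set version of the multiset containment. Thus $G$ is a perfect group with $\textrm{cd}(G)\subseteq\textrm{cd}(H)$ and $|G|\le|H|$, and Corollary~\ref{main1} applies to give $G\cong H$.

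The entire substance of the result sits in Theorem~\ref{main} (via Corollary~\ref{main1}); the deduction above is routine, and the only mildly delicate point is the multiplicity bookkeeping that yields perfectness, together with the harmless exclusion of the trivial group. I would also record that the same scheme shows $\textrm{X}_1(G)=\textrm{X}_1(H)$, equivalently $\C G\cong \C H$, implies $G\cong H$, which is the complex-group-algebra statement promised in the abstract.
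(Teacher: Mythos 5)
Your proposal is correct and follows essentially the same route as the paper: extract $|G|\le|H|$ (via the sum of squares of degrees), $\textrm{cd}(G)\subseteq\textrm{cd}(H)$, and perfectness of $G$ (from the unique degree-$1$ entry of $\textrm{X}_1(H)$, since $H$ is non-abelian simple), then invoke Corollary~\ref{main1}. The only difference is that you spell out the multiplicity and sum-of-squares bookkeeping a bit more explicitly than the paper does.
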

Corollary \ref{main2} gives a positive answer to \cite[Question $11.8(a)$]{Mazurov} for simple
exceptional groups of Lie type. For alternating groups of degree at least $5,$ sporadic simple
groups, and the Tits group,  similar results were obtained in \cite{Hung1}. Corollary \ref{main1}
is related to the Huppert's Conjecture \cite{Hupp} which says that the non-abelian simple groups
are determined by the set of their character degrees. Let $\C$ be the complex number field and let
$G$ be a group. Denote by $\C G$ the group algebra of $G$ over $\C.$

\begin{corollary}\label{main3} Let $H$ be a simple exceptional group of Lie type. If $G$ is a group
such that $\C G\cong \C H,$ then $G\cong H.$
\end{corollary}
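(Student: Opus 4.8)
The plan is to reduce Corollary~\ref{main3} to Corollary~\ref{main2} through the standard dictionary between complex group algebras and the first column of the character table. Recall that by Wedderburn's theorem the complex group algebra decomposes as $\C G \cong \bigoplus_{\chi \in \mathrm{Irr}(G)} M_{\chi(1)}(\C)$, a direct sum of matrix algebras with one block of size $\chi(1)$ for each $\chi \in \mathrm{Irr}(G)$. Hence the isomorphism type of $\C G$ as a $\C$-algebra is determined precisely by the multiset $\{\chi(1) : \chi \in \mathrm{Irr}(G)\}$, which is exactly $\mathrm{X}_1(G)$. Consequently $\C G \cong \C H$ holds if and only if $\mathrm{X}_1(G) = \mathrm{X}_1(H)$ as multisets; I would state this equivalence explicitly as the first step.

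Next, assuming $\C G \cong \C H$, the equivalence above yields $\mathrm{X}_1(G) = \mathrm{X}_1(H)$, and in particular $\mathrm{X}_1(G) \subseteq \mathrm{X}_1(H)$. Corollary~\ref{main2} then applies verbatim and gives $G \cong H$, which finishes the argument. For completeness I would also remark that from $\mathrm{X}_1(G) = \mathrm{X}_1(H)$ one gets $|G| = \sum_{\chi} \chi(1)^2 = |H|$, so one could alternatively route through Corollary~\ref{main1} after first checking that $G$ is perfect (equivalently, that $G$ has a unique linear character), which again is read off from $\mathrm{X}_1(G) = \mathrm{X}_1(H)$ since $H$ is simple; either path works.

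The only genuine content lies upstream: the hard part will be Theorem~\ref{main} together with its strengthening to multiplicities underlying Corollary~\ref{main2}, where one must rule out, for every non-abelian simple group $S$ with $\mathrm{cd}(S) \subseteq \mathrm{cd}(H)$, all possibilities other than $S \cong H$ — this is where the classification of finite simple groups, the Lie-theoretic degree formulas (Steinberg's theorem, unipotent and semisimple character degrees), and the case-by-case analysis enter. Granting Corollary~\ref{main2}, the passage to complex group algebras in Corollary~\ref{main3} is a one-line formality and presents no real obstacle.
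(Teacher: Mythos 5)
Your proof is correct and follows essentially the same route as the paper: the paper invokes Molien's theorem to conclude $\mathrm{X}_1(G)=\mathrm{X}_1(H)$ from $\C G\cong \C H$ (the same Wedderburn-decomposition fact you spell out) and then applies Corollary \ref{main2}.
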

Basically, this corollary says that if $H$ is a simple exceptional group of Lie type  then $H$ is
uniquely determined by the structure of its complex group algebra $\C H.$ This result is related to
the Brauer's Problem $2$ which asks the following question: Which groups can be determined by the
structure of their complex group algebras? In \cite{Hung1} and \cite{Hung3}, we have shown that the
alternating groups of degree at least $5,$ the sporadic simple groups  and the symmetric groups are
uniquely determined by the structure of their complex group algebras. In \cite{Hung2}, we will
establish a similar result for the remaining simple classical groups of Lie type.

We fix some notation. If $\textrm{cd}(G)=\{s_0,s_1,\cdots,s_t\},$ where $1=s_0<s_1<\cdots<s_t,$
then we define $d_i(G)=s_i$ for all $1\leq i\leq t.$  Then $d_i(G)$ is the $i^{th}$ smallest degree
of the non-trivial character degrees of $G.$ We also define $t(G)=t+1,$ the number of distinct
character degrees of $G.$ We put $b(G)=s_t.$ Then $b(G)$ is the largest character degree of $G.$ If
$n$ is an integer then we denote by $\pi(n)$ the set of all prime divisors of $n.$ If $G$ is a
group, we will write $\pi(G)$ instead of $\pi(|G|)$ to denote the set of all prime divisors of the
order of $G.$
\section{Preliminaries}
The following result is a well-known theorem due to Zsigmondy.
\begin{lemma}\emph{(\cite[Theorems $5.2.14,5.2.15$]{KL}).}
Let $q$ and $n$ be integers with $q\geq 2$ and $n\geq 3.$ Then
$q^n-1$ has a prime $l$ such that $l$ does not divide $q^m-1$ for
$m<n,$ unless $(q,n)=(2,6).$ Moreover if  $l\mid q^k-1$ then $n\mid
k.$
\end{lemma}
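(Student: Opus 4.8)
The plan is to derive the statement from elementary properties of cyclotomic polynomials, using the factorization $x^{n}-1=\prod_{d\mid n}\Phi_{d}(x)$, so that $\Phi_{n}(q)\mid q^{n}-1$. I would dispose of the ``moreover'' assertion first, since it is the soft part: a prime $l$ dividing $q^{n}-1$ but not $q^{m}-1$ for any $m<n$ is precisely one for which the multiplicative order of $q$ modulo $l$ equals $n$, and because $l\mid q^{k}-1$ holds exactly when that order divides $k$, such an $l$ automatically satisfies $n\mid k$. Thus a prime with the non-divisibility property in the statement (a \emph{primitive prime divisor} of $q^{n}-1$) automatically obeys the last clause, and the whole theorem reduces to producing a primitive prime divisor of $q^{n}-1$ whenever $(q,n)\neq(2,6)$.

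The central step is a structural description of the prime divisors of $\Phi_{n}(q)$. I would prove: if a prime $l$ divides $\Phi_{n}(q)$ and $d$ is the order of $q$ modulo $l$, then either $d=n$ (so $l$ is a primitive prime divisor), or $d<n$, in which case the quotient $n/d$ is a power of $l$ and (since $n\geq 3$) $l$ divides $\Phi_{n}(q)$ exactly once; the proof compares the $l$-adic valuations of $q^{n}-1$ and $q^{d}-1$ via a lifting-the-exponent argument. From this I would deduce that there is at most one non-primitive prime divisor of $\Phi_{n}(q)$: if $l_{1}\neq l_{2}$ were two of them with orders $d_{1},d_{2}$, then $n=d_{1}l_{1}^{a_{1}}=d_{2}l_{2}^{a_{2}}$, so $l_{2}\mid d_{1}$; but $d_{1}$ divides $l_{1}-1$ by Fermat, whence $l_{2}\leq d_{1}<l_{1}$, and symmetrically $l_{1}<l_{2}$, a contradiction. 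Consequently, if $q^{n}-1$ has no primitive prime divisor then every prime dividing $\Phi_{n}(q)$ is non-primitive, so $\Phi_{n}(q)$ is a power of a single such prime $l$ and, by the valuation bound, equals $l$ itself; since $l\mid n$ this gives $\Phi_{n}(q)=l\leq n$.

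It then suffices to show $\Phi_{n}(q)>n$ for every $n\geq 3$ and $q\geq 2$ with $(q,n)\neq(2,6)$. Writing $\Phi_{n}(q)=\prod_{\zeta}(q-\zeta)$ over the primitive $n$-th roots of unity $\zeta$, each factor satisfies $|q-\zeta|\geq q-1$, and for $n\geq 2$ strictly so since none of the $\zeta$ equals $1$; hence $\Phi_{n}(q)>(q-1)^{\phi(n)}$. Since $2^{\phi(n)}>n$ for all $n\geq 3$ except $n\in\{4,6\}$, this settles every case with $q\geq 3$ once one also checks $\Phi_{4}(q)=q^{2}+1$ and $\Phi_{6}(q)=q^{2}-q+1$ directly. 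For $q=2$ the crude bound collapses, so I would instead use a sharper lower estimate for $\Phi_{n}(2)$ together with a direct evaluation for the finitely many small $n$ with $\phi(n)$ very small, finding $\Phi_{n}(2)>n$ for all $n\geq 3$ with $n\neq 6$, while $\Phi_{6}(2)=3$ and indeed $2^{6}-1=63=3^{2}\cdot 7$ with $3\mid 2^{2}-1$ and $7\mid 2^{3}-1$, so no primitive prime divisor exists there.

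I expect the main obstacle to lie in this last paragraph: the inequality $\Phi_{n}(q)>(q-1)^{\phi(n)}$ is too weak for $q=2$, so obtaining a bound strong enough for that case and then organizing the residual finite analysis so that $(2,6)$ emerges as the unique exception is where the real care is required; by comparison the valuation bookkeeping in the structural step is routine.
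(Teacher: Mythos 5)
The paper does not prove this lemma at all: it is Zsigmondy's theorem, quoted verbatim with a citation to Kleidman--Liebeck (Theorems 5.2.14, 5.2.15), so any proof you give is necessarily a different route. Your sketch is the classical Birkhoff--Vandiver/Zsigmondy argument via cyclotomic polynomials, and its skeleton is sound: the reduction of the ``moreover'' clause to the statement that the order of $q$ modulo $l$ equals $n$; the structural lemma that a prime $l\mid\Phi_n(q)$ with $\mathrm{ord}_l(q)=d<n$ forces $n/d$ to be a power of $l$ with $v_l(\Phi_n(q))=1$ for $n\geq 3$ (your lifting-the-exponent bookkeeping, including the $l=2$, $n=2^a$ case); the uniqueness of such an $l$ via $d_1\mid l_1-1$; and the resulting inequality $\Phi_n(q)=l\leq n$ in the absence of a primitive prime divisor, against $\Phi_n(q)>(q-1)^{\phi(n)}$. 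The only place where your write-up is genuinely incomplete is the one you flag yourself: for $q=2$ the bound $(q-1)^{\phi(n)}$ is useless and you never state the ``sharper estimate.'' This is fillable by a standard device, e.g. $\Phi_n(2)=2^{\phi(n)}\prod_{d\mid n}(1-2^{-d})^{\mu(n/d)}\geq 2^{\phi(n)}\prod_{j\geq 1}(1-2^{-j})>2^{\phi(n)-2}$, so $\Phi_n(2)>n$ once $2^{\phi(n)}\geq 4n$, leaving a finite list of small $n$ to evaluate directly, with $n=6$ emerging as the unique failure ($\Phi_6(2)=3$, $2^6-1=3^2\cdot 7$). With that estimate written out your argument is complete and self-contained, which is what it buys over the paper's approach; the paper's citation buys brevity and avoids exactly this delicate $q=2$ analysis, which is where the published proofs also spend their effort.
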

Such an $l$ is called a \emph{primitive prime divisor}. We denote by
$l_n(q)$ the smallest primitive prime divisor of $q^n-1$ for fixed
$q$ and $n.$ When $n$ is odd and $(q,n)\neq (2,3)$ then there is a
primitive prime divisor of $q^{2n}-1$ and we denote this primitive
prime divisor by $l_{-n}(q).$

 The next
result gives a lower bound for the largest character degree of the alternating groups. Recall that
$k(G)$ is the number of conjugacy classes of $G.$
\begin{lemma}\label{Lem3} Assume that $n\geq 10.$ Then $b(A_n)\geq 2^{n-1}.$
\end{lemma}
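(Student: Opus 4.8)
The plan is to combine the classical lower bound for the largest degree of $S_n$ with the fact that passing from $S_n$ to $A_n$ costs at most a factor $2$. Index $\textrm{Irr}(S_n)$ by partitions of $n$, so that $S_n$ has exactly $p(n)$ irreducible characters, where $p(n)$ is the number of partitions of $n$. From $\sum_{\lambda\vdash n}\chi^\lambda(1)^2=|S_n|=n!$, with $p(n)$ summands, we get $b(S_n)\ge\sqrt{n!/p(n)}$. By Clifford theory each $\chi\in\textrm{Irr}(S_n)$ restricts to $A_n$ either irreducibly or as a sum of two irreducible characters of degree $\chi(1)/2$, so $b(A_n)\ge b(S_n)/2\ge\frac{1}{2}\sqrt{n!/p(n)}$. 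Hence it is enough to prove the elementary-looking inequality $n!\ge 4^n p(n)$ for all $n\ge10$.

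For $n$ not too small this is immediate from crude estimates: with the elementary bound $p(n)\le 2^{n-1}$ (or, better, the Hardy--Ramanujan bound $p(n)<e^{\pi\sqrt{2n/3}}$) and Stirling's formula, $\log n!\sim n\log n$ eventually dominates $n\log 4+\log p(n)=O(n)$, and one checks that $n!\ge 4^n p(n)$ holds for all $n\ge N_0$ with an explicit $N_0$ (somewhere around $15$ to $20$, depending on which bound is used). For the finitely many remaining $n$ with $10\le n<N_0$ one argues by hand: it suffices to produce, for each such $n$, a single partition $\lambda\vdash n$ that is not self-conjugate and for which the hook length formula gives $\chi^\lambda(1)\ge 2^{n-1}$, since then $\chi^\lambda(1)\in\textrm{cd}(A_n)$. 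Near-staircase shapes work — e.g. $\lambda=(5,3,1,1)$ for $n=10$ gives $\chi^\lambda(1)=567\ge 512$, and $\lambda=(5,3,2,1)$ for $n=11$ gives $\chi^\lambda(1)=2310\ge 1024$ — and in any case the relevant degrees for small $n$ are tabulated in \cite{atlas}.

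The one thing requiring care is that the target inequality is genuinely tight for small $n$: note $b(S_{10})=768<2^{10}$, so the factor-of-two loss in the restriction to $A_n$ cannot be squandered. Thus one must either keep $N_0$ small by using reasonably sharp estimates for $p(n)$ and $n!$, or else accept a short finite check. A clean way to avoid estimating $p(n)$ altogether is to use the branching rule: since $\textrm{Ind}_{S_m}^{S_{m+1}}\chi^\lambda=\sum_\mu\chi^\mu$, summed over the $\mu\vdash m+1$ obtained from $\lambda$ by adding one box, and $\lambda\vdash m$ has at most $\sqrt{2m}+1$ addable corners, we get $b(S_{m+1})\ge\frac{m+1}{\sqrt{2m}+1}\,b(S_m)$. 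Since $\frac{m+1}{\sqrt{2m}+1}\ge 2$ for every $m\ge10$, this gives $b(S_n)\ge 2^{\,n-10}b(S_{10})$, and a short computation of the first few factors improves this enough to conclude $b(A_n)\ge 2^{n-1}$ for all $n\ge14$, leaving only $n\in\{10,11,12,13\}$ to be verified directly.
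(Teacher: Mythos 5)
Your argument is correct, and it overlaps with the paper's proof only in its first step: the paper also writes $b(A_n)\geq b(S_n)/2$ (via $A_n\unlhd S_n$ of index $2$) and $b(S_n)^2\geq n!/k(S_n)$, but then invokes Mar\'oti's bound $k(S_n)\leq 3^{(n-1)/2}$ and proves $n!\geq 3^{(n-1)/2}\cdot 2^{2n}$ by induction for $n\geq 18$, disposing of $10\leq n\leq 17$ by a GAP computation. You replace the cited class-number bound by elementary partition estimates, and, more substantially, you replace the machine check by explicit non-self-conjugate partitions evaluated with the hook length formula (your values $567$ for $(5,3,1,1)\vdash 10$ and $2310$ for $(5,3,2,1)\vdash 11$ are correct, and non-self-conjugacy does give $\chi^\lambda(1)\in\textrm{cd}(A_n)$), together with the branching-rule recursion $b(S_{m+1})\geq \frac{m+1}{\sqrt{2m}+1}\,b(S_m)$, which is a genuinely different key lemma: it makes the whole proof computer-free and independent of Mar\'oti's theorem, at the cost of a slightly fiddlier numerical verification. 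Two small points deserve care but are not gaps. First, the branching route is tight: the product $\prod_{m=10}^{13}\frac{m+1}{\sqrt{2m}+1}\cdot b(S_{10})\approx 16469$ exceeds $2^{14}=16384$ by only about half a percent, so the ``short computation of the first few factors'' must actually be carried out (it does work, and since each factor is at least $2$ for $m\geq 14$ the induction then runs). Second, in your first route the crude bound $p(n)\leq 2^{n-1}$ forces $N_0\approx 19$, and \cite{atlas} only tabulates alternating groups up to $A_{13}$, so the range $14\leq n\leq 18$ would need hook-length computations rather than a table lookup (alternatively, exact values of $p(n)$ give $n!\geq 4^np(n)$ already for $n\geq 14$); your branching-rule alternative sidesteps this entirely, leaving only $n\in\{10,11,12,13\}$, which are covered by your explicit partitions and by \cite{atlas}.
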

\begin{proof} Using \cite{GAP}, we can check that $b(A_n)\geq 2^{n-1}$
for all $10\leq n\leq 17.$ Thus we can assume  $n\geq 18.$ Observe that as $A_n\unlhd S_n,$ we
obtain $b(S_n)\leq |S_n:A_n|b(A_n)=2b(A_n)$ (see \cite[Exercise $5.4,$ p. $74$]{Isaacs}). Thus it
suffices to show that $b(S_n)\geq 2^n,$ for any $n\geq 18.$ We have $|S_n|=n!=\sum_{\chi\in
\textrm{Irr}(S_n)}\chi(1)^2\leq k(S_n)b(S_n)^2.$ It follows from \cite[Theorem $1.1$]{Mar} that
$k(S_n)\leq 3^{(n-1)/2}$ and hence $b(S_n)^2\geq n!/k(S_n)\geq n!/3^{(n-1)/2}.$ Thus we only need
to verify that $n!\geq 3^{(n-1)/2}\cdot 2^{2n},$ for all $n\geq 18.$ We will prove this inequality
by induction on $n\geq 18.$ Obviously this is true for $n=18.$ Assume that $m!\geq 3^{(m-1)/2}\cdot
2^{2m},$ for some $m\geq 18.$ We need to show that $(m+1)!\geq 3^{m/2}\cdot 2^{2m+2}.$ By induction
hypothesis, we have that $(m+1)!=(m+1)\cdot m!\geq (m+1)\cdot 3^{(m-1)/2}\cdot 4^m.$ Thus, we need
to prove that $m+1\geq 4\cdot 3^{1/2}.$ As $4\cdot 3^{1/2}<4\cdot 2=8<18<m+1,$ we obtain
$(m+1)!\geq 3^{m/2}\cdot 2^{2m+2},$ and so $n!\geq 3^{(n-1)/2}\cdot 2^{2n},$ for any $n\geq 18.$ We
have shown that $b(S_n)^2\geq 2^{2n},$ which implies $b(S_n)\geq 2^n.$ This finishes the proof.
\end{proof}


The following lemma gives some basic properties of simple groups which satisfy the hypotheses of
Theorem \ref{main}. These results will be used frequently in the next section. The conclusions
$(ii)$ and $(iii)$ are obvious,  while $(i)$ and $(iv)$ can be found in \cite[Lemma $11$]{Hung1}.
\begin{lemma}\label{Lem4} Let $S$ and $H$ be non-abelian simple groups.
If $\emph{\textrm{cd}}(S)\subseteq \emph{\textrm{cd}}(H)$ then the following holds:

$(i)$ $d_i(S)\geq d_i(H),$ for all $i;$

$(ii)$ $b(S)\leq b(H);$

$(iii)$ $t(S)\leq t(H);$

$(iv)$ $\pi(S)\subseteq \pi(H).$
\end{lemma}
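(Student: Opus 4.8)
The plan is to dispatch the four assertions one at a time: (ii) and (iii) are immediate from the definitions, (i) is a short counting argument about finite totally ordered sets, and (iv) is the only part needing an external input, namely the Ito--Michler theorem. For (ii), note $b(S)=\max\textrm{cd}(S)$ and $\textrm{cd}(S)\subseteq\textrm{cd}(H)$, so $b(S)\in\textrm{cd}(H)$ and hence $b(S)\le\max\textrm{cd}(H)=b(H)$. For (iii), $t(S)=|\textrm{cd}(S)|\le|\textrm{cd}(H)|=t(H)$ directly from the inclusion $\textrm{cd}(S)\subseteq\textrm{cd}(H)$.

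For (i), I would enumerate $\textrm{cd}(S)=\{1=a_0<a_1<\cdots<a_m\}$ and $\textrm{cd}(H)=\{1=b_0<b_1<\cdots<b_n\}$, so that $d_i(S)=a_i$, $d_i(H)=b_i$, and $m\le n$ by (iii). Fix $i$ with $1\le i\le m$. Then $a_0<a_1<\cdots<a_i$ are $i+1$ distinct elements of $\textrm{cd}(H)$, all at most $a_i$; since $b_i$ is by definition the $(i+1)$-st smallest element of $\textrm{cd}(H)$, this forces $a_i\ge b_i$, i.e. $d_i(S)\ge d_i(H)$.

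For (iv), I would invoke the Ito--Michler theorem: if a prime $p$ divides no character degree of a finite group $G$, then $G$ has a normal abelian Sylow $p$-subgroup. If $p\in\pi(S)$ with $S$ non-abelian simple, then $S$ has no normal abelian Sylow $p$-subgroup (a normal Sylow $p$-subgroup would be trivial or all of $S$, and $S$ is neither a $p$-group nor of order coprime to $p$), so $p$ divides some degree in $\textrm{cd}(S)$; conversely every degree in $\textrm{cd}(S)$ divides $|S|$. Hence $\pi(S)=\bigcup_{d\in\textrm{cd}(S)}\pi(d)$, and likewise $\pi(H)=\bigcup_{d\in\textrm{cd}(H)}\pi(d)$. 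Since $\textrm{cd}(S)\subseteq\textrm{cd}(H)$, we conclude $\pi(S)\subseteq\pi(H)$.

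There is no real obstacle here: the only non-formal ingredient is the Ito--Michler theorem used for (iv), and everything else is the elementary observation that an inclusion of two finite totally ordered sets with least element $1$ forces both the termwise inequality of their enumerations and the inclusion of their sets of prime divisors. This is presumably why the authors regard (ii) and (iii) as obvious and cite \cite{Hung1} only for (i) and (iv).
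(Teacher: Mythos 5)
Your proof is correct, and it follows essentially the same route as the source: the paper itself gives no argument, declaring (ii) and (iii) obvious and citing \cite{Hung1} for (i) and (iv), where (i) is proved by the same counting of the $i+1$ smallest degrees and (iv) by the It\^{o}--Michler theorem combined with the fact that every character degree divides the group order. So your write-up is a complete and faithful reconstruction of the intended argument.
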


The next two results are well-known (see \cite[Theorems $3.10,3.11$]{Isaacs}), we will use them
freely without further reference.
\begin{lemma} Let $G$ be a group. Then the following holds.

$(a)$ If $|\pi(G)|\leq 2,$ then $G$ is solvable.

$(b)$ If $\chi\in \emph{\textrm{Irr}}(G)$ then $\chi(1)\mid |G|.$

\end{lemma}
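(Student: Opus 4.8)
The plan is to treat the two parts by the arithmetic of the algebraic integers attached to an irreducible character. Part $(b)$ is short: starting from $\la\chi,\chi\ra=1$, written class-wise as $\sum_{i=1}^{r}|C_i|\,|\chi(g_i)|^2=|G|$ with $C_1,\dots,C_r$ the conjugacy classes and $g_i\in C_i$, I would divide by $\chi(1)$ to get $\sum_i\bigl(|C_i|\chi(g_i)/\chi(1)\bigr)\overline{\chi(g_i)}=|G|/\chi(1)$. Each coefficient $|C_i|\chi(g_i)/\chi(1)$ is an algebraic integer, being the scalar by which the $i$-th class sum acts on a module affording $\chi$ (the class sums span a commutative $\mathbb{Z}$-subalgebra of $\C G$, so these scalars are integral over $\mathbb{Z}$), and each $\chi(g_i)$ is an algebraic integer, being a sum of roots of unity. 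Hence $|G|/\chi(1)$ is a rational algebraic integer, so it lies in $\mathbb{Z}$, which is exactly $\chi(1)\mid|G|$.

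Part $(a)$ is Burnside's $p^aq^b$-theorem, which I would prove by induction on $|G|$. If $|\pi(G)|\leq 1$ then $G$ is a $p$-group, hence solvable; so assume $|G|=p^aq^b$ with $p\neq q$. If $G$ has a proper non-trivial normal subgroup $N$, then $|N|$ and $|G/N|$ divide $|G|$, so both are solvable by induction and hence so is $G$; and an abelian simple group is cyclic of prime order. Thus it suffices to show that no non-abelian simple group has order $p^aq^b$. The first ingredient is the vanishing lemma: if $\chi\in\textrm{Irr}(G)$ and $\gcd(\chi(1),|g^G|)=1$, then $\chi(g)=0$ or $|\chi(g)|=\chi(1)$. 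To see this, a Bézout identity $u\chi(1)+v|g^G|=1$ shows $\chi(g)/\chi(1)$ is an algebraic integer; since it and all its Galois conjugates have absolute value at most $1$ (each being $1/\chi(1)$ times a sum of $\chi(1)$ roots of unity, as Galois automorphisms permute roots of unity), its field norm is $0$ or a unit, forcing $\chi(g)=0$ or $|\chi(g)|=\chi(1)$ respectively.

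The second ingredient is that a non-abelian simple group $G$ has no conjugacy class of prime-power size exceeding $1$. If $|g^G|=p^k$ with $k\geq 1$, column orthogonality of the $g$-column against the identity column gives $1+\sum_{1\neq\chi\in\textrm{Irr}(G)}\chi(1)\overline{\chi(g)}=0$. Separating the $\chi$ with $p\mid\chi(1)$ and using that the remaining sum over non-trivial $\chi$ with $p\nmid\chi(1)$ is (times $p$) an algebraic integer, if all those $\chi$ had $\chi(g)=0$ we would get $-1/p\in\mathbb{Z}$, a contradiction; so some non-trivial $\chi$ with $p\nmid\chi(1)$ has $\chi(g)\neq 0$, whence $|\chi(g)|=\chi(1)$ by the vanishing lemma, $g$ acts as a scalar in that representation, $g\ker\chi$ is central in $G/\ker\chi$, and simplicity forces $\ker\chi=1$, so $g\in Z(G)=1$, a contradiction. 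To finish: in a non-abelian simple $G$ of order $p^aq^b$, choose a Sylow $q$-subgroup $Q$ and $1\neq z\in Z(Q)$; then $Q\leq C_G(z)$, so $|g^G|=|G:C_G(z)|$ is a power of $p$, and it is not $1$ (else $z\in Z(G)=1$), contradicting the previous step. Hence no such $G$ exists, completing the induction.

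The bulk of the work, and the only genuinely delicate point, is the vanishing lemma together with the column-orthogonality argument ruling out prime-power conjugacy classes in simple groups; the induction, the handling of normal subgroups and quotients, and the Sylow-centre argument are all routine once those are in hand. The subtle technical step throughout is controlling the Galois conjugates of $\chi(g)/\chi(1)$ precisely enough that a norm (or a rationality) argument applies.
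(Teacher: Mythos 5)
Your proof is correct: part $(b)$ is the standard algebraic-integer argument via the central character values $|C_i|\chi(g_i)/\chi(1)$, and part $(a)$ is Burnside's classical character-theoretic proof of the $p^aq^b$-theorem (the vanishing lemma, the non-existence of prime-power class sizes in non-abelian simple groups, and the Sylow-centre step), with all the delicate points — integrality of $\chi(g)/\chi(1)$, the Galois/norm argument, and the column-orthogonality division by $p$ — handled properly. The paper itself gives no proof of this lemma; it simply records both statements as well known, citing Theorems $3.10$ and $3.11$ of Isaacs' book, and your arguments are exactly the standard proofs found there, so there is no substantive difference in approach.
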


In Table \ref{Ta1} we list the orders of the simple exceptional groups of Lie type.  Table
\ref{Ta2} is taken from \cite[Table $5.3.A$]{KL} which gives the Landazuri-Seitz-Zalesskii bounds
for non-trivial minimal degrees of the cross-characteristic representations of finite simple groups
of Lie type. In Table \ref{Ta3}, we give the $p$-part of the degree of a unipotent character $\psi$
of the simple groups of Lie type, where $\psi$ is not the Steinberg character. We refer to
\cite{car85} for the classification of unipotent characters and the notion of symbols. In Table
\ref{Ta4}, we have $b(Fi_{24}')=336033532800,$ $b(B)=16547812226400000$ and
$b(M)=258823477531055064045234375.$ The results in this table are taken from \cite{atlas}. Finally,
in Table \ref{Ta5} we list the upper bounds for the largest character degree of  the simple
exceptional group  of Lie type. These upper bounds were obtained in \cite[Theorem $2.1$]{Seitz}.

\begin{table}
 \begin{center}
  \caption{The orders of simple exceptional groups} \label{Ta1}
  \begin{tabular}{c|c|c}
   \hline
   $S$  &$d$& $|S|$\\ \hline
   ${}^2B_2(q^2),q^2=2^{2m+1}$&$1$&$q^4(q^4+1)(q^2-1)$\\
   ${}^2G_2(q^2),q^2=3^{2m+1}$&$1$&$q^6(q^6+1)(q^2-1)$\\
   ${}^2F_4(q^2),q^2=2^{2m+1}$&$1$&$q^{24}(q^{12}+1)(q^8-1)(q^6+1)(q^2-1)$\\
   ${}^3D_4(q)$&$1$&$q^{12}(q^8+q^4+1)(q^6-1)(q^2-1)$\\
   ${}^2E_6(q)$&$(3,q+1)$&$q^{36}\prod_{i\in\{2,5,6,8,9,12\}}(q^i-(-1)^i)/d$\\
   $G_2(q)$&$1$&$q^{6}(q^6-1)(q^2-1)$\\
   $F_4(q)$&$1$&$q^{24}\prod_{i\in\{2,6,8,12\}}(q^i-1)$\\
   $E_6(q)$&$(3,q-1)$&$q^{36}\prod_{i\in\{2,5,6,8,9,12\}}(q^i-1)/d$\\
   $E_7(q)$&$(2,q-1)$&$q^{63}\prod_{i\in\{2,6,8,10,12,14,18\}}(q^i-1)/d$\\
   $E_8(q)$&$1$&$q^{120}\prod_{i\in\{ 2,8,12,14,18,20,24,30\}}(q^i-1)$\\\hline
  \end{tabular}
 \end{center}
\end{table}

\begin{table}
 \begin{center}
  \caption{The Landazuri-Seitz-Zalesskii bounds} \label{Ta2}
  \begin{tabular}{c|c|c}
   \hline
   $S$  & $e(S)$& Exceptions\\ \hline
   ${}^2B_2(q^2),q^2=2^{2m+1}$&$q(q^2-1)/\sqrt{2}$&${}^2B_2(8)$\\
   ${}^2G_2(q^2),q^2=3^{2m+1}$&$q^2(q^2-1)$&\\
   ${}^2F_4(q^2),q^2=2^{2m+1}$&$q^9(q^2-1)/\sqrt{2}$&\\
   ${}^3D_4(q)$&$q^3(q^2-1)$&\\
   ${}^2E_6(q)$&$q^9(q^2-1)$&\\
   $G_2(q)$&$q(q^2-1)$&$G_2(3),G_2(4)$\\
   $F_4(q)$&$q^6(q^2-1),\mbox{$q$ odd}$&\\
   &$q^7(q^3-1)(q-1)/2,\mbox{$q$ even}$&$F_4(2)$\\
   $E_6(q)$&$q^9(q^2-1)$&\\
   $E_7(q)$&$q^{15}(q^2-1)$&\\
   $E_8(q)$&$q^{27}(q^2-1)$&\\\hline
  \end{tabular}
 \end{center}
\end{table}

\begin{table}
 \begin{center}
  \caption{Some unipotent characters of simple groups of Lie type} \label{Ta3}
  \begin{tabular}{l|l|r}
   \hline
   $S=S(p^b)$  & Symbol &$p$-part of degree\\ \hline
   $L_n^\epsilon(p^b)$ & $(1^{n-2},2)$&$p^{b(n-1)(n-2)/2}$\\
   $S_{2n}(p^b),p=2$&$\binom{0\:1\:2\:\cdots\:n-2\:n-1\:n}{\:\:1\:2\cdots\:n-2}$&$2^{b(n-1)^2-1}$\\
   $S_{2n}(p^b),p>2$ && $p^{b(n-1)^2}$\\
   $O_{2n+1}(p^b),p>2$  &$\binom{0\:1\:2\:\cdots\:n-2\:n-1\:n}{\:\:1\:2\cdots\:n-2}$& $p^{b(n-1)^2}$\\
   $O_{2n}^+(p^b)$&$\binom{0\:1\:2\:\cdots\:n-3\:n-1}{\:1\:2\:3\cdots\:n-2\:n-1}$&$p^{b(n^2-3n+3)}$\\
   $O_{2n}^-(p^b)$&$\binom{0\:1\:2\:\cdots\:n-2\:n-1}{\:\:1\:2\cdots\:n-2}$&$p^{b(n^2-3n+2)}$\\
   ${}^3D_4(p^b)$&$\phi_{1,3}''$&$p^{7b}$\\
   $F_4(p^b)$&$\phi_{9,10}$&$p^{10b}$\\
   ${}^2F_4(q^2)$&${}^2B_2[a],\epsilon$&$\frac{1}{\sqrt{2}}q^{13}$\\
   $E_6(p^b)$&$\phi_{6,25}$&$p^{25b}$\\
   ${}^2E_6(p^b)$&$\phi_{2,16}''$&$p^{25b}$\\
   $E_7(p^b)$&$\phi_{7,46}$&$p^{46b}$\\
   $E_8(p^b)$&$\phi_{8,91}$&$p^{91b}$\\\hline
\end{tabular}
\end{center}
\end{table}

\begin{table}
 \begin{center}
  \caption{Sporadic simple groups}\label{Ta4}
  \begin{tabular}{l|c|c|c|c|r}
   \hline
   $S$  & $t(S)$&$d_1(S)$&$d_2(S)$&$d_3(S)$&$b(S)$\\ \hline
   $M_{11}$&$7$&$10$&$11$&$16$&$55$\\
   $M_{12}$&$11$&$11$&$16$&$45$&$176$\\
   $J_1$&$7$&$56$&$76$&$77$&$209$\\
   $M_{22}$&$10$&$21$&$45$&$55$&$385$\\
   $J_2$&$16$&$14$&$21$&$36$&$336$\\
   $M_{23}$&$11$&$22$&$45$&$230$&$2024$\\
   $HS$&$18$&$22$&$77$&$154$&$3200$\\
   $J_3$&$14$&$85$&$323$&$324$&$3078$\\
   $M_{24}$&$20$&$23$&$45$&$231$&$10395$\\
   $McL$&$17$&$22$&$231$&$252$&$10395$\\
   $He$&$23$&$51$&$153$&$680$&$23324$\\
   $Ru$&$28$&$378$&$406$&$783$&$118784$\\
   $Suz$&$36$&$143$&$364$&$780$&$248832$\\
   $O'N$&$19$&$10944$&$13376$&$25916$&$234080$\\
   $Co_3$&$34$&$23$&$253$&$275$&$255024$\\
   $Co_2$&$51$&$23$&$253$&$275$&$2095875$\\
   $Fi_{22}$&$52$&$78$&$429$&$1001$&$2729376$\\
   $HN$&$42$&$133$&$760$&$3344$&$5878125$\\
   $Ly$&$36$&$2480$&$45694$&$48174$&$71008476$\\
   $Th$&$39$&$248$&$4123$&$27000$&$190373976$\\
   $Fi_{23}$&$84$&$782$&$3588$&$5083$&$559458900$\\
   $Co_1$&$98$&$276$&$299$&$1771$&$551675124$\\
   $J_4$&$42$&$1333$&$299367$&$887778$&$3054840657$\\
   $Fi_{24}'$&$91$&$8671$&$57477$&$249458$&$b(Fi_{24}')$\\
   $B$&$165$&$4371$&$96255$&$1139374$&$b(B)$\\
   $M$&$170$&$196883$&$21296876$&$842609326$&$b(M)$\\
   ${}^2F_4(2)'$&$14$&$26$&$27$&$78$&$2048$\\\hline
  \end{tabular}
 \end{center}
\end{table}

\begin{table}
 \begin{center}
  \caption{Upper bounds of $b(S)$ for simple groups of Lie type}\label{Ta5}
  \begin{tabular}{c|c|c|c|c|c}
   \hline
   $S$  &         $F_4(q)$ & $G_2(q)$  &${}^2B_2(q^2)$&${}^2F_4(q^2)$ & ${}^2G_2(q^2)$\\ \hline
   $\tilde{b}(S)$&$q^{28}$ &$q^8$      &$q^6$&$q^{28}$ &$q^8$\\\hline
    $S$  &   $E_6(q)$&$E_7(q)$&$E_{8}(q)$ &${}^2E_{6}(q)$  &${}^3D_{4}(q)$  \\ \hline
   $\tilde{b}(S)$&$q^{42}$&$q^{70}$  &$q^{128}$& $q^{42}$&$q^{17}$\\\hline
  \end{tabular}
 \end{center}
\end{table}

\section{Proof of the main results}
We assume the following set up. Let $\mathbb{G}$ be a simple simply connected algebraic group of
exceptional type defined over $\bar{\mathbb{F}}_p,$ where $p$ is prime. Let $L$ be the fixed point
group of $\mathbb{G}$ under a suitable Frobenius morphism such that $L/Z(L)$ is isomorphic to the
simple exceptional group of Lie type $H.$ Generically $L$ is the universal covering group of $H.$
We remark that if $\textrm{cd}(S)\subseteq \textrm{cd}(H)$ then $\textrm{cd}(S)\subseteq
\textrm{cd}(L)$ for any group $S.$ Using the classification of finite simple groups, $S$ is a
sporadic group, an alternating group of degree at least $5$ or a simple group of Lie type. We will
treat the Tits group as a sporadic group. We will prove Theorem \ref{main} by a series of
propositions. Using \cite[Theorem \textrm{C}]{Malle05},  the result for Suzuki groups follows
easily.


\begin{proposition}\label{prop1} If $S$ is a non-abelian simple
group with $\emph{\textrm{cd}}(S)\subseteq \emph{\textrm{cd}}({}^2B_2(q^2)),$ where
$q^2=2^{2m+1},m\geq 1,$ then $S\cong {}^2B_2(q^2).$
\end{proposition}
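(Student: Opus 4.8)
The plan is to start from the character degree set of $H={}^2B_2(q^2)$, which one reads off from the generic character table of the Suzuki groups (this is where \cite[Theorem C]{Malle05} enters). Writing $q^2=2^{2m+1}$ and $t:=\sqrt{q^2/2}=2^m$ (so that $\sqrt{2q^2}=2t$), one has
\[
\textrm{cd}(H)=\bigl\{\,1,\ t(q^2-1),\ q^4,\ q^4+1,\ (q^2-1)(q^2-2t+1),\ (q^2-1)(q^2+2t+1)\,\bigr\},
\]
so that $t(H)=6$, $d_1(H)=t(q^2-1)$, $b(H)=(q^2-1)(q^2+2t+1)$, and $(q^2-2t+1)(q^2+2t+1)=q^4+1$; note also that $q^4$ is the only non-trivial $2$-power occurring in $\textrm{cd}(H)$. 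The one structural consequence I really need is that $3\nmid|H|$: since $|H|=q^4(q^4+1)(q^2-1)$ and $q^2=2^{2m+1}\equiv 2\pmod 3$, each of the three factors $q^4$, $q^4+1$ and $q^2-1$ is prime to $3$.

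Now suppose $S$ is non-abelian simple with $\textrm{cd}(S)\subseteq\textrm{cd}(H)$. By Lemma~\ref{Lem4}$(iv)$ we get $\pi(S)\subseteq\pi(H)$, hence $3\nmid|S|$. By the classification of finite simple groups --- in the form convenient here, \cite[Theorem C]{Malle05} --- the only non-abelian simple groups whose order is not divisible by $3$ are the Suzuki groups, so $S\cong{}^2B_2(q'^2)$ with $q'^2=2^{2n+1}$, $n\ge 1$. In particular this rules out, at a stroke, all alternating groups of degree $\ge 5$, all sporadic groups and the Tits group, whose orders are all divisible by $3$.

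It remains to force $q'=q$, and there are two equally short ways. The Steinberg character gives $q'^4\in\textrm{cd}(S)\subseteq\textrm{cd}(H)$, and $q'^4$ is a non-trivial $2$-power, so $q'^4=q^4$ (the unique such element of $\textrm{cd}(H)$), whence $q'=q$. Alternatively, both $d_1({}^2B_2(x))=\sqrt{x/2}\,(x-1)$ and $b({}^2B_2(x))=(x-1)(x+\sqrt{2x}+1)$ are strictly increasing in $x$, so $d_1(S)\ge d_1(H)$ (Lemma~\ref{Lem4}$(i)$) forces $q'^2\ge q^2$, while $b(S)\le b(H)$ (Lemma~\ref{Lem4}$(ii)$) forces $q'^2\le q^2$. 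Either way $S\cong H$, as claimed.

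Since the argument is this short I do not expect a genuine obstacle; the only points demanding care are recording the list $\textrm{cd}(H)$ correctly --- in particular checking that the small group ${}^2B_2(8)$ already conforms to the generic pattern used above --- and invoking the classification in exactly the shape that reduces $S$ to a Suzuki group, which is precisely the role played by \cite[Theorem C]{Malle05}.
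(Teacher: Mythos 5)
Your argument is mathematically sound, but it follows a genuinely different route from the paper, and your two key citations are misattributed. The paper first invokes Malle--Moret\'o \cite[Theorem C]{Malle05} (nonsolvable groups with few character degrees) together with $t(S)\le t(H)=6$ to shrink $S$ to the short list $\{L_2(r),L_3(4),{}^2B_2(2^{2n+1})\}$, eliminates the first two because $3\in\pi(S)$ while $3\notin\pi(H)$, and then pins down the field size via Malle--Zalesskii \cite[Theorem 1.1]{Malle}: the only nontrivial prime-power degree of $H$ is the Steinberg degree $|H|_2$, so $|S|_2=|H|_2$. You instead reduce to the Suzuki groups by the classification-based fact that the Suzuki groups are the only non-abelian simple groups of order prime to $3$, and then finish either with the explicit degree list of ${}^2B_2(q^2)$ (unique nontrivial $2$-power degree $q^4$) or with monotonicity of $d_1$ and $b$; both finishing steps are fine, and your list of $\textrm{cd}({}^2B_2(q^2))$, including the check for ${}^2B_2(8)$, is correct. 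What your route buys is independence from the count of character degrees and from the Malle--Zalesskii theorem, at the price of invoking the full CFSG-based classification of simple $3'$-groups; the paper's route stays inside the toolkit it reuses in the later propositions. Do fix the references: neither the Suzuki character table nor the $3'$-group statement is \cite[Theorem C]{Malle05} --- the degree list is due to Suzuki \cite{Suz} (or \cite{atlas} for ${}^2B_2(8)$), and the statement that simple groups of order coprime to $3$ are exactly the Suzuki groups must be cited to the classification, not to Malle--Moret\'o.
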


\begin{proof} Let $H\cong {}^2B_2(2^{2m+1}).$ Assume that $S$ is a
non-abelian simple group with $\textrm{cd}(S)\subseteq \textrm{cd}(H),$ we will show that $S\cong
H.$ By Lemma \ref{Lem4}$(iii)$ and \cite[Theorem \textrm{C}]{Malle05}, we have  $4\leq t(S)\leq
t(H)= 6$ and $S\in \{L_2(r),L_3(4),{}^2B_2(2^{2n+1})\},$ where $r$ is a prime power. If $S=L_2(r)$
or $S=L_3(4)$ then $3\in \pi(S).$ However it is well-known that $3\not\in \pi(H),$ which
contradicts Lemma \ref{Lem4}$(iv).$ Hence $S={}^2B_2(2^{2n+1}).$ By \cite[Theorem $1.1$]{Malle},
the only nontrivial character degree of $H$ which is a power of $2$ is the degree of the Steinberg
character of $H$ of degree $|H|_2.$ As $S$ also possesses a character of degree $|S|_2,$ it follows
that $|S|_2=|H|_2$ so that $2(2n+1)=2(2m+1),$ which implies that $m=n.$ Thus $S\cong H$ as
required.
\end{proof}

Therefore we can assume that $H$ is a simple exceptional group of Lie type defined over a field of
size $q$  in characteristic $p$ with  $H\neq {}^2B_2(q^2),{}^2F_4(2)'.$

\begin{proposition}\label{prop2} If $S$ is a sporadic simple group or the Tits group, then
$\emph{\textrm{cd}}(S)\nsubseteq \emph{\textrm{cd}}(H).$
\end{proposition}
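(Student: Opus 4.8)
The plan is to rule out each of the $27$ sporadic groups (including the Tits group ${}^2F_4(2)'$) by comparing invariants of $\mathrm{cd}(S)$ against what $\mathrm{cd}(H)$ can contain, where $H$ ranges over the simple exceptional groups of Lie type other than ${}^2B_2(q^2)$ and ${}^2F_4(2)'$. The main tools are Lemma \ref{Lem4}, the small-degree bounds of Table \ref{Ta2} (Landazuri--Seitz--Zalesskii), the largest-degree bounds of Table \ref{Ta5}, and the tables of sporadic group data (Table \ref{Ta4}). First I would dispose of the generic case: since by Lemma \ref{Lem4}$(i)$ we need $d_1(S)\geq d_1(H)\geq e(H)$ (as $e(H)$ from Table \ref{Ta2} is a lower bound for every nontrivial character degree of $H$), and since $d_1(S)\leq b(S)\leq b(H)\leq \tilde b(H)$ by Lemma \ref{Lem4}$(ii)$ and Table \ref{Ta5}, a containment $\mathrm{cd}(S)\subseteq\mathrm{cd}(H)$ forces $e(H)\leq b(S)$. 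For a fixed sporadic $S$, $b(S)$ is an explicit constant (Table \ref{Ta4}), and $e(H)$ grows like a fixed power of $q$; this bounds $q$ from above for each Lie type, leaving only finitely many candidate pairs $(S,H)$.

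For the finitely many surviving pairs I would use sharper arithmetic constraints. The key observation is that $\pi(S)\subseteq\pi(H)$ by Lemma \ref{Lem4}$(iv)$: the sporadic groups have orders divisible by specific sets of primes, and the order formulas in Table \ref{Ta1} restrict which primes can divide $|H|$ once $q$ is small — in particular large primes dividing $|S|$ (such as $23,37,43,67$ for the Monster-type groups, or $7,11$ for several others) must appear as primitive prime divisors $l_i(q)$ in the cyclotomic factorization of $|H|$, which pins down $q$ modulo various $i$ or eliminates $H$ outright. Combined with the interval constraint $d_1(H)\leq d_1(S)<d_2(S)\leq\cdots$ from Lemma \ref{Lem4}$(i)$, and the requirement $t(S)\leq t(H)$ from Lemma \ref{Lem4}$(iii)$, almost every pair is eliminated. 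For the handful of genuinely small cases (e.g. $H={}^3D_4(2)$, $G_2(3)$, $G_2(4)$, $F_4(2)$, ${}^2G_2(27)$ against the smaller sporadics), I would appeal directly to the explicit character degree sets available in \cite{atlas} or \cite{GAP} and check that some $d_i(S)$ or the smallest few degrees of $S$ simply do not occur in $\mathrm{cd}(H)$.

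The step I expect to be the main obstacle is the bookkeeping for the medium-sized exceptional groups paired with the larger sporadic groups — cases where $b(S)$ is large enough that the crude bound $e(H)\leq b(S)$ still permits a moderately large $q$, so that $\pi(H)$ is rich and the prime-divisor argument alone does not finish. There one must invoke the $p$-part information of Table \ref{Ta3}: $H$ has a unipotent character of $p$-power degree (a fixed power of $q$), and since $\mathrm{cd}(S)$ for a sporadic $S$ contains only bounded powers of each prime, matching these forces $p$ to be large and $q=p$, after which the order comparison with Table \ref{Ta1} collapses the case. Carefully organizing these comparisons so that each of the ten exceptional families is handled uniformly — rather than by dozens of ad hoc sub-cases — is where most of the work lies, but no individual step is conceptually hard once the inequalities $e(H)\leq b(S)\leq\tilde b(H)$ and $\pi(S)\subseteq\pi(H)$ are in place.
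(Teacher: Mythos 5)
Your overall skeleton --- bound $q$ from above via the Landazuri--Seitz--Zalesskii bound of Table \ref{Ta2}, then finish the finitely many surviving pairs with $\pi(S)\subseteq\pi(H)$, $t(S)\le t(H)$, and explicit degree checks from \cite{atlas}/\cite{GAP} --- is exactly the paper's argument. However, the step you yourself identify as carrying the main load is not valid. You propose to close the ``moderately large $q$'' cases by matching a character of $H$ of $p$-power degree (via Table \ref{Ta3}) against $\mathrm{cd}(S)$ and concluding that $p$ is large and $q=p$. The hypothesis is $\mathrm{cd}(S)\subseteq\mathrm{cd}(H)$, not the reverse: nothing forces any degree of $H$ --- Steinberg or unipotent --- to occur in $\mathrm{cd}(S)$, so no constraint on $p$ or $q$ can be extracted this way. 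Moreover, Table \ref{Ta3} does not even list $p$-power degrees of $H$; it records the $p$-part of certain non-Steinberg unipotent degrees, and in the paper it is used only in the equal-characteristic comparison of Proposition \ref{prop5}, not here.

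The large-$q$ problem you are trying to solve arises only because you bound $q$ by the weak inequality $e(H)\le b(S)$. The correct chain, which you in fact write down, is $e(H)\le d_1(H)\le d_1(S)$, and since $d_1(S)\le 196883$ for every sporadic group (Table \ref{Ta4}), this confines $q$ to a handful of values in each family (for instance $q=2$ for $E_7,E_8$, $q=3$ for $E_6$, ${}^2E_6$ and odd-$q$ $F_4$, $q\le 23$ for $G_2$ with $S=O'N$). The paper then finishes with the tools you list elsewhere: $t(S)\le t(H)$, computed from the known generic character tables of ${}^2G_2$, $G_2$ and ${}^3D_4$, prunes the sporadic list for the small-rank families; $\pi(S)\subseteq\pi(H)$ eliminates the survivors such as $O'N$ and $M$; and for the residual small-$q$ groups (${}^3D_4(3)$, $E_6(3)$, ${}^2E_6(3)$, $F_4(3)$, $E_7(2)$, $E_8(2)$) one compares $d_2(H)$, taken from L\"ubeck's tables of smallest representation degrees \cite{Lub}, with $d_2(S)$. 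If you replace your Table \ref{Ta3} step by this $d_2$ comparison and bound $q$ using $d_1(S)$ rather than $b(S)$, your outline becomes the paper's proof; as written, the case you call the crux is not actually closed.
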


\begin{proof}
Let $S$ be a simple sporadic group or the Tits group. By way of contradiction, assume that
$\textrm{cd}(S)\subseteq \textrm{cd}(H).$  We outline our general argument here. By Lemma
\ref{Lem4}$(i)$, we have $d_1(S)\geq d_1(H)$ and hence by Table \ref{Ta2}, we obtain, apart from
some exceptions, $d_1(S)\geq e(H),$ where $e(H)$ is the Landazuri-Seitz-Zalesskii bound for $H.$
Solving this inequality, we get an upper bound for $q.$  For these values of $q,$ we will show that
$\pi(S)\not\subseteq \pi(H)$ and hence $\textrm{cd}(S)\not\subseteq \textrm{cd}(H)$ by Lemma
\ref{Lem4}$(iv).$

{\bf Case $1.$} $H={}^2G_2(q^2),q^2=3^{2m+1},m\geq 1.$ By Lemma
\ref{Lem4}$(iii)$ and \cite{Ward}, we have $t(S)\leq 11.$ By Table
\ref{Ta4}, we only need to consider the following cases
$\{M_{11},M_{12},J_1,$ $M_{22},$ $M_{23}\}.$ By Lemma \ref{Lem4}$(i),$
we have $d_1(S)\geq d_1(H)$ so that by Table \ref{Ta2} we
obtain $d_1(S)\geq 3^{2m+1}(3^{2m+1}-1).$ As $m\geq 1,$ we
have $3^{2m+1}(3^{2m+1}-1)\geq 702>56\geq d_1(S)$ for any sporadic
simple groups considered above.

{\bf Case $2.$} $H=G_2(q).$ By Lemma \ref{Lem4}$(iii)$ and
\cite{Chang,Eno76,Eno86}, we have $t(S)\leq 23.$ By Table \ref{Ta4},
we only need to consider the following cases
$\{M_{11},M_{12},J_1,M_{22},J_2,M_{23},HS,J_3,$ $M_{24},$
$McL,He,O'N,{}^2F_4(2)'\}.$ Using \cite{atlas}, we can assume that
$q\geq 7.$ Thus by Lemma \ref{Lem4}$(i),$ we have $d_1(S)\geq
d_1(H)$ so that by Table \ref{Ta2} we obtain $d_1(S)\geq q(q^2-1),$
where $q\geq 7.$ Since $q(q^2-1)\geq 336,$ we see that
$d_1(S)<336\leq d_1(H)$ unless $S=O'N.$ We have
$\pi(O'N)=\{2,3,5,7,11,19,31\}.$ As $d_1(S)=10944\geq q(q^2-1),$ we
deduce that $7\leq q\leq 23.$ By Lemma \ref{Lem4}$(iv),$ we have
$\{2,3,5,7,11,19,31\}\subseteq \pi(G_2(q)).$ However we can check
that $\pi(O'N)\not\subseteq \pi(G_2(q))$ for any $7\leq q\leq 23.$

{\bf Case $3.$} $H={}^3D_4(q).$ By Lemma \ref{Lem4}$(iii)$ and \cite{Der}, we have $t(S)\leq 33.$
By Table \ref{Ta4}, we only need to consider the following cases
$\{M_{11},M_{12},J_1,M_{22},J_2,M_{23},HS,J_3,$ $M_{24},$ $McL,He,Ru,O'N,{}^2F_4(2)'\}.$ Using
\cite{atlas}, we can assume that $q\geq 3.$ If $q=3$ then by \cite{Lub}, we have $d_2(H)\geq
3942>d_2(S)$ unless $S=O'N.$ But then  $\pi(O'N)\not\subseteq \pi({}^3D_4(3)),$ which contradicts
Lemma \ref{Lem4}$(iv).$ Thus $q\geq 4.$ By Lemma \ref{Lem4}$(i),$ we have $d_1(S)\geq d_1(H)$ so
that by Table \ref{Ta2} we obtain $d_1(S)\geq q^3(q^2-1).$ As $q\geq 4,$ we have $q^3(q^2-1)\geq
960,$ and so $d_1(S)<960\leq d_1(H)$ for all sporadic simple groups above, unless $S=O'N.$ As
$d_1(O'N)\geq q^3(q^2-1),$ we deduce that $q\leq 5$ so that $4\leq q\leq 5.$ However we can check
that $\pi(O'N)\not\subseteq \pi({}^3D_4(q))$ for any $4\leq q\leq 5,$ which contradicts Lemma
\ref{Lem4}$(iv).$

{\bf Case $4.$} $H={}^2F_4(q^2),q^2=2^{2m+1},m\geq 1.$  By Lemma \ref{Lem4}$(iv)$ and Table
\ref{Ta2}, we have $d_1(S)\geq d_1(H)\geq 2^{9m+4}(2^{2m+1}-1).$ As $m\geq 1,$ we have $d_1(H)\geq
57344>d_1(S)$ unless $S=M.$ If $m\geq 2$ then $d_1(H)\geq 130023424>d_1(M).$ Thus $m=1$ and so
$H={}^2F_4(8).$ However we can check that $\pi(M)\not\subseteq \pi({}^2F_4(8)).$ Thus
$\textrm{cd}(S)\not\subseteq \textrm{cd}(H).$

{\bf Case $5.$} $H={}^2E_6(q)$ or $E_6(q).$  We have $d_1(S)\geq d_1(H)\geq q^9(q^2-1)$ by Table
\ref{Ta2}.  Using \cite{atlas}, we can assume that $q\geq 3.$ If $q\geq 4,$ then  $d_1(H)\geq
3932160>d_1(S)$ for all sporadic groups. Hence $q=3.$ By \cite{Lub}, we have $d_2(H)\geq
32690203>21296876\geq d_2(S)$ for all sporadic groups. Thus $\textrm{cd}(S)\not\subseteq
\textrm{cd}(H).$

{\bf Case $6.$} $H=F_4(q).$ By \cite{atlas}, we can assume that
$q\geq 3.$ Assume first that $q$ is even. Then $q\geq 4$ and by
Table \ref{Ta2}, $d_1(H)\geq q^7(q^3-1)(q-1)/2\geq 1548288>d_1(S)$
for all sporadic groups. Thus $q$ is odd and $q\geq 3.$ By Table
\ref{Ta2}, we have $d_1(H)\geq q^6(q^2-1).$ If $q\geq 5,$ then
$d_1(H)\geq 375000>d_1(S)$ for all $S.$ Hence $q=3.$  By \cite{Lub},
we have either $d_1(H)\geq 6643>d_1(S)$ or $d_2(H)\geq 83148>d_2(S)$ unless
$S=M.$ But then $\pi(M)\not\subseteq \pi(F_4(3)).$

{\bf Case $7.$} $H=E_7(q)$ or $E_8(q).$ Using \cite{Lub}, one can assume that $q\geq 3$ since
$d_2(E_8(2))\geq d_2(E_7(2))>d_2(S)$ for all $S.$ By Table \ref{Ta2}, we have $d_1(H)\geq
q^{15}(q^2-1)\geq 114791256>d_1(S)$ for all $S.$ Thus $\textrm{cd}(S)\not\subseteq \textrm{cd}(H).$
The proof is now complete.

\end{proof}

\begin{proposition}\label{prop3} If $m\geq 5,$ then  $\emph{\textrm{cd}}(A_m)\nsubseteq \emph{\textrm{cd}}(H).$
\end{proposition}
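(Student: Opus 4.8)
The plan is to follow the same strategy used for sporadic groups in Proposition \ref{prop2}, combining the Landazuri--Seitz--Zalesskii lower bounds (Table \ref{Ta2}), the lower bound $b(A_m)\geq 2^{m-1}$ from Lemma \ref{Lem3}, and Lemma \ref{Lem4}. Suppose $\textrm{cd}(A_m)\subseteq \textrm{cd}(H)$ for some $m\geq 5$; we derive a contradiction. First, by Lemma \ref{Lem4}$(i)$ we have $d_1(A_m)\geq d_1(H)\geq e(H)$ for the exceptional group $H$ in question (treating the finitely many LSZ exceptions separately via \cite{atlas}/\cite{Lub}). Since $d_1(A_m)$ is small compared with $e(H)$ (indeed $d_1(A_m)=m-1$ for $m\geq 7$, $m\neq $ a few small values, and in general $d_1(A_m)\leq \binom{m}{2}$ or so by the standard classification of small-degree representations of alternating groups, e.g.\ \cite{KL}), this inequality forces $m$ to be bounded in terms of $q$; conversely, for $H$ fixed over a field of size $q$, Table \ref{Ta5} gives $b(H)\leq \tilde b(H)=q^N$ for the appropriate exponent $N$, while Lemma \ref{Lem4}$(ii)$ and Lemma \ref{Lem3} give $2^{m-1}\leq b(A_m)\leq b(H)\leq q^N$. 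Together these two inequalities pin $m$ and $q$ into a finite (and small) range for each type of $H$.

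Concretely, I would go case by case through the exceptional types, exactly as in Proposition \ref{prop2}: for each type $H=H(q)$ use $m-1\leq d_1(A_m)$... wait, the inequality goes the other way; the point is $e(H)\leq d_1(A_m)$, and since $d_1(A_m)$ grows only polynomially in $m$ while $2^{m-1}\leq q^N$ forces $m\leq N\log_2 q + 1$, substituting back into $e(H)\leq d_1(A_m)\leq \binom{m-1}{2}$ (using the known value of $d_1(A_m)$) yields an explicit small upper bound for $q$. For those finitely many pairs $(q,m)$ one then invokes Lemma \ref{Lem4}$(iv)$: it suffices to exhibit a prime in $\pi(A_m)$ that does not divide $|H|$. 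Since $\pi(A_m)$ contains every prime up to $m$, and the primes dividing the order of an exceptional group over $\mathbb{F}_q$ are severely constrained (they divide $q$ or one of a short list of cyclotomic values $\Phi_i(q)$), for $m$ not too small relative to $q$ there is always a prime $\ell$ with $q<\ell\leq m$ that is coprime to $|H|$ — or, when $m$ is genuinely small, one checks the finitely many cases directly against \cite{atlas} and \cite{Lub}, comparing $d_2$ as well if $d_1$ alone is insufficient (this is precisely the role $O'N$ and $M$ played in the sporadic case).

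The main obstacle I anticipate is the narrow band of borderline pairs $(q,m)$ where $m$ is only slightly larger than $q$: there the crude "prime between $q$ and $m$" argument can fail because $\pi(H)$ may happen to contain all small primes (exceptional groups have rich prime divisor sets coming from many cyclotomic polynomials), and the degree inequalities $d_1(A_m)\geq d_1(H)$, $b(A_m)\leq b(H)$ are not by themselves contradictory. For those cases I would sharpen the argument using the second-smallest degree $d_2(A_m)$ (again via the classification of low-dimensional representations of $A_m$, so $d_2(A_m)$ is known explicitly, e.g.\ $d_2(A_m)=\binom{m-1}{2}-1$ or similar for large $m$) against $d_2(H)$ from \cite{Lub}, and the counting bound $|A_m|=\sum\chi(1)^2\leq t(H)\,b(H)^2$ together with $t(A_m)\leq t(H)$ (Lemma \ref{Lem4}$(iii)$) to force $|A_m|$ — hence $m$ — down to a size where a direct prime comparison, or direct inspection of \cite{atlas}, finishes the job. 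The verification is routine but type-by-type; structurally it is identical to Proposition \ref{prop2} with $2^{m-1}$ playing the role that the sporadic $d_1$ values played there.
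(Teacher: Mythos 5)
You have gathered exactly the right ingredients (Tables \ref{Ta2} and \ref{Ta5}, Lemma \ref{Lem3}, Lemma \ref{Lem4}), but the way you combine them leaves a real gap at the end. After extracting $m\leq N\log_2 q+1$ from $2^{m-1}\leq \tilde{b}(H)=q^N$, you feed only the weak estimate $d_1(A_m)\leq\binom{m-1}{2}$ back into $e(H)\leq d_1(A_m)$, so you land on a finite residual set of pairs $(q,m)$ and then defer to an unexecuted case-by-case finish (a prime between $q$ and $m$, a comparison of $d_2$, the counting bound $|A_m|\leq t(H)b(H)^2$) which you yourself concede can fail in borderline cases; as written this is a plan rather than a proof, and your claim that the inequalities $d_1(A_m)\geq d_1(H)$ and $b(A_m)\leq b(H)$ ``are not by themselves contradictory'' is in fact wrong. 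The observation you are missing is that Table \ref{Ta2} together with \cite{atlas} gives $d_1(H)\geq 10$, so Lemma \ref{Lem4}$(i)$ already forces $m\geq 11$, and in that range $d_1(A_m)=m-1$ exactly. Hence $m-1\geq d_1(H)\geq e(H)$, and combining with Lemma \ref{Lem3} and Lemma \ref{Lem4}$(ii)$ one gets $\tilde{b}(H)\geq b(H)\geq b(A_m)\geq 2^{m-1}\geq 2^{e(H)},$ i.e. $q^N\geq 2^{e(H)}$, which is routine to refute for every admissible $q$ in every exceptional family, uniformly: no residual pairs $(q,m)$ survive and no prime-divisor or counting arguments are needed. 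This is precisely how the paper closes the argument.

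The only cases that genuinely need separate treatment are the Landazuri--Seitz--Zalesskii exceptions $G_2(3)$, $G_2(4)$ and $F_4(2)$ (note ${}^2B_2(8)$ does not arise since $H\neq {}^2B_2(q^2)$ at this stage), which you dispose of only in a parenthesis. They require a concrete argument, and the paper supplies one: for instance for $H=G_2(3)$ one has $11\notin\pi(H)$, while $m\geq 11$ gives $11\in\pi(A_m)$, contradicting Lemma \ref{Lem4}$(iv)$; the same device handles $G_2(4)$ and $F_4(2)$. If you replace your residual finite case analysis by the uniform inequality $2^{e(H)}\leq\tilde{b}(H)$ and spell out these three exceptional groups, your argument becomes the paper's proof.
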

\begin{proof}
By way of contradiction, suppose that $\textrm{cd}(A_m)\subseteq \textrm{cd}(H).$ Using
\cite{atlas} and Table \ref{Ta2}, we see that $d_1(H)\geq 10$ so that by Lemma \ref{Lem4}$(i),$ we
can assume that $m\geq 11.$ Assume that $H=G_2(3).$ Then $d_1(H)=14$ and $11\not\in \pi(H).$ As
$m\geq 11,$ we have $d_1(A_m)=m-1\geq d_1(H)=14$ so that $m\geq 15.$ But then $11\in \pi(A_m),$
which contradicts Lemma \ref{Lem4}$(iv).$ Using the same argument, we can assume $H\neq
G_2(3),G_2(4),F_4(2).$ By Lemma \ref{Lem4}$(i,ii)$ and Table \ref{Ta2}, we have $d_1(A_m)=m-1\geq
d_1(H)\geq e(H)$ and $b(A_m)\leq b(H).$ Combining with Lemma \ref{Lem3}, we obtain $b(H)\geq
2^{e(H)}.$ Now by Table \ref{Ta5}, we have $\tilde{b}(H)\geq b(H)\geq 2^{e(H)}.$ However it is
routine to check that this inequality cannot happen so that $\textrm{cd}(A_m)\not\subseteq
\textrm{cd}(H).$

\end{proof}

\begin{proposition}\label{prop4} If $S$ is a simple group of Lie type
in cross-characteristic then  $\emph{\textrm{cd}}(S)\not\subseteq \emph{\textrm{cd}}(H).$
\end{proposition}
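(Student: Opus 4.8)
The plan is to argue by contradiction: suppose $S = S(r^f)$ is a simple group of Lie type in characteristic $r \neq p$ with $\textrm{cd}(S) \subseteq \textrm{cd}(H)$. The key numerical constraints are Lemma~\ref{Lem4}: $d_1(S) \ge d_1(H)$, $b(S) \le b(H)$, $t(S) \le t(H)$, and $\pi(S) \subseteq \pi(H)$. First I would invoke the Landazuri--Seitz--Zalesskii bounds of Table~\ref{Ta2} from the $H$-side together with the known explicit lower bounds for $d_1(S)$ (again Table~\ref{Ta2}, now read for $S$, with the small exceptions handled via \cite{atlas}, \cite{Lub}) to extract an upper bound for $r^f$ in terms of $q$. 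Simultaneously, the largest-degree bound $b(S) \le b(H) \le \tilde b(H)$ of Table~\ref{Ta5}, compared against a suitable lower bound for $b(S)$ (for simple groups of Lie type the Steinberg character alone gives $b(S) \ge |S|_r$, which is roughly $r^{f \cdot N}$ with $N$ the number of positive roots), forces the rank and field size of $S$ to be small relative to those of $H$. In fact for most pairs the inequality $\tilde b(H) \ge |S|_r$ combined with $d_1(S) \ge e(H)$ already restricts $S$ to a short finite list.

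Next I would run through the cases of $H$ exactly as in Propositions~\ref{prop2} and \ref{prop3}: for each family $H \in \{{}^2G_2(q^2), G_2(q), {}^3D_4(q), {}^2F_4(q^2), {}^2E_6(q), E_6(q), F_4(q), E_7(q), E_8(q)\}$, solve the inequalities to bound $q$ and the parameters of $S$, then finish each surviving pair by a prime-divisor obstruction $\pi(S) \nsubseteq \pi(H)$. This is where Zsigmondy's theorem (Lemma~1) does the heavy lifting: $H$ has a bounded set of cyclotomic factors $\Phi_i(q)$ appearing in $|H|$ (the exponents $i$ are listed in Table~\ref{Ta1}), so $\pi(H)$ is controlled, whereas a large-rank or large-field $S$ will have a primitive prime divisor $l_n(r^f)$ for some $n$ that cannot divide any $q^i - 1$ with $i$ in the allowed set for $H$ --- unless $n$ is forced to be one of finitely many values, which again pins down $S$.

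The genuinely delicate cases are those where $S$ and $H$ are built from the \emph{same} characteristic torus structure, e.g.\ $S = {}^3D_4(r^f)$ versus $H = F_4(q)$, or $S = G_2(r^f)$ versus $H$ of larger exceptional type, where $\pi(S) \subseteq \pi(H)$ can genuinely hold for infinitely many parameters and the prime-divisor obstruction alone fails. For these I would fall back on the finer degree data: compare $d_1(S)$ and $d_2(S)$ (from \cite{Lub}) against $d_1(H), d_2(H)$, or use the unipotent-character degrees of Table~\ref{Ta3} --- the $r$-part of a non-Steinberg unipotent degree of $S$ is $r^{fk}$ for an explicit small $k$, and such a pure prime power must then equal the degree of some character of $H$, which (by \cite[Theorem~1.1]{Malle}, as used in Proposition~\ref{prop1}) forces it to be $|H|_p$, impossible for size reasons. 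The main obstacle I anticipate is precisely organizing this last step uniformly: ruling out the "same characteristic behaviour, different type" collisions requires a case-by-case comparison of the first few character degrees, and keeping the bookkeeping finite and clean across all nine families of $H$ (and their twisted variants, including the small-$q$ exceptions excluded from the generic bounds) is the part that demands care rather than a single clean idea.
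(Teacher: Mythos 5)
There is a genuine gap, and it sits exactly at the step you rely on to finish the hard cases. You propose to take a non-Steinberg unipotent character of $S$, observe that the $r$-part of its degree is a pure power $r^{fk}$, and then claim "such a pure prime power must equal the degree of some character of $H$", concluding via \cite[Theorem 1.1]{Malle} that it would have to be $|H|_p$. But the $r$-part of a character degree of $S$ is not itself a character degree of $S$, so the hypothesis $\textrm{cd}(S)\subseteq \textrm{cd}(H)$ gives you nothing about it: only full degrees transfer. Moreover, even for a genuine prime-power degree in characteristic $s\neq p$, Malle--Zalesskii does not force it to equal $|H|_p$ (a power of $s$ cannot be a power of $p$); what the theorem actually yields is a classification of all prime-power degrees of $H$, with one cross-characteristic exception. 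So the device meant to dispose of the residual "same torus structure" collisions does not work as stated, and the remaining bulk of your plan (LSZ bounds, $|S|_r\le \tilde b(H)$, Zsigmondy obstructions across all classical and exceptional families of $S$ against nine families of $H$) is only sketched, not executed, and is precisely the kind of case analysis whose feasibility you yourself flag as the main obstacle.

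The fix is to apply your prime-power idea not to unipotent $r$-parts but to the Steinberg character of $S$ itself: its degree is exactly $|S|_s$, a nontrivial power of $s\neq p$, and it must lie in $\textrm{cd}(H)$. Since $H$ also has the degree $|H|_p$, the group $H$ would have two nontrivial prime-power degrees in different characteristics, and \cite[Theorem 1.1]{Malle} then forces $H=G_2(3)$, whose only nontrivial prime-power degrees are $2^6$ and $3^6$. This single observation makes the entire family-by-family analysis unnecessary --- it is the paper's proof --- but it still leaves a case your sketch does not address: for $H=G_2(3)$ one gets $S$ of characteristic $2$ with $|S|_2=2^6$ and $\pi(S)\subseteq\pi(H)=\{2,3,7,13\}$, and this is eliminated by checking the lists of simple groups with three or four prime divisors in \cite[Lemmas 2, 3]{Hag}. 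Without either that uniform Steinberg argument or the completed case analysis plus this final verification, the proposal does not yet prove the proposition.
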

\begin{proof} Suppose that $\textrm{cd}(S)\subseteq \textrm{cd}(H)$ and that $S$ is a
simple group of Lie type in characteristic $s\neq p.$ Then $S$ possesses a character of degree
$|S|_s$ which is a nontrivial power of $s.$ As $\textrm{cd}(S)\subseteq \textrm{cd}(H),$ we deduce
that $H$ has two different nontrivial prime power degrees. By \cite[Theorem $1.1$]{Malle}, we
deduce that $H=G_2(3)$ and the only nontrivial prime power degrees in $H$ are $2^6$ and $3^6.$ As
$\pi(H)=\{2,3,7,13\}$ and the characteristic of $H$ is $3,$ we deduce that $S$ is a simple group of
Lie type in characteristic $2$ with $3\leq |\pi(S)|\leq 4,$ $\pi(S)\subseteq \{2,3,7,13\}$ and
$|S|_2=2^6.$ However, by checking the list of simple groups with $3$ or $4$ prime divisors in
\cite[Lemmas $2,3$]{Hag}, there is no simple groups satisfying these conditions. (Note that  in
\cite[Lemma $3$]{Hag} the author missed the group $U_4(3)$). This contradiction proves the
proposition.
\end{proof}

\begin{proposition}\label{prop5} If $S$ is a simple group of Lie type of characteristic $p$ as that of
$H,$ and $\emph{\textrm{cd}}(S)\subseteq \emph{\textrm{cd}}(H),$ then $S\cong H.$
\end{proposition}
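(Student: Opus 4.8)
The plan is to exploit the classification of finite simple groups of Lie type in characteristic $p$ together with the detailed information about character degrees of the exceptional group $H$, in particular the existence of the Steinberg character and of small unipotent characters. First I would record that $S$ is a simple group of Lie type of characteristic $p$ with $\textrm{cd}(S)\subseteq \textrm{cd}(H)\subseteq \textrm{cd}(L)$, where $L$ is the universal cover of $H$. Since the Steinberg character of $S$ has degree $|S|_p$ and the Steinberg character of $H$ has degree $|H|_p$, and by \cite[Theorem $1.1$]{Malle} the only nontrivial prime-power degree of $H$ (for $H\neq G_2(3)$, already excluded) is $|H|_p$, we must have $|S|_p=|H|_p$; that is, the $p$-parts of the two orders coincide. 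This single equality is extremely restrictive, and the first main step is to run through the list in Table \ref{Ta1} of possible $H$ and, for each, determine which simple groups $S$ of Lie type in characteristic $p$ have $|S|_p$ equal to the exponent recorded there.

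Next I would bring in the unipotent-character data of Table \ref{Ta3}. For each candidate $S$ with the correct $p$-part, $S$ carries a "small" unipotent character $\psi$ (not Steinberg) whose $p$-part is the quantity listed in that table; since $\psi(1)\in \textrm{cd}(S)\subseteq \textrm{cd}(L)$, the group $L$ must have an irreducible character of that exact degree, and in particular an irreducible character whose $p$-part equals that listed value. Comparing against the degrees of $L$ — using the classification of unipotent characters of exceptional groups from \cite{car85}, together with the bound $b(H)\leq \tilde b(H)$ from Table \ref{Ta5} and the Landazuri–Seitz–Zalesskii bound $d_1(H)\geq e(H)$ from Table \ref{Ta2} — should eliminate all the "wrong" candidates: either the required small degree is too big to divide or appear in $L$, or it is incompatible with the $p$-part structure of the degrees of $L$. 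The plan is to do this case-by-case over the ten families of $H$; in most cases a combination of the $p$-part equality and one or two low-degree comparisons (via $d_1$, $d_2$ from \cite{Lub,atlas}) forces the untwisted/twisted Lie rank and the field size of $S$ to match those of $H$.

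Once the type and field of $S$ are pinned down to equal those of $H$, the remaining step is to rule out the few genuinely ambiguous coincidences — for instance groups sharing the same order polynomial such as $B_n(q)$ and $C_n(q)$, or small-rank exceptional isogenies — and here I would use the precise sets $\textrm{cd}(S)$ versus $\textrm{cd}(H)$, exploiting that $\textrm{cd}(S)\subseteq\textrm{cd}(H)$ is an inclusion of degree sets and that any genuine difference in the Weyl-group/unipotent-degree data produces a degree in one group not present in the other; again Tables \ref{Ta2}, \ref{Ta3}, \ref{Ta5} and the data of \cite{Lub} supply the needed numerics. At the end one concludes $S\cong H$.

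The hardest part will be the bookkeeping in the middle step: for a given $p$-part $|H|_p=q^N$ there can be several infinite families of $S$ (classical groups of varying rank over varying subfields of $\bar{\mathbb{F}}_p$, plus other exceptional types) realizing that exponent, and for each one must verify that the small unipotent degree from Table \ref{Ta3} — or some other characteristic-$p$ degree — fails to lie in $\textrm{cd}(H)$. Controlling the classical families of large rank is the main obstacle, since their character degrees are not as tightly constrained; the key leverage there is that $\textrm{cd}(H)$ has only $t(H)$ elements with largest element at most $\tilde b(H)$, whereas a classical group of rank comparable to $\log_q|H|$ has too many, or too large, degrees, contradicting Lemma \ref{Lem4}$(ii),(iii)$.
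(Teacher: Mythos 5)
Your opening moves coincide with the paper's: the equality $|S|_p=|H|_p$ extracted from the Steinberg character of $S$ together with Malle--Zalesskii, and then the use of the small unipotent characters of Table \ref{Ta3}, whose $p$-parts are compared with the maximal $p$-part of a non-Steinberg character of $H$, to cap the rank and field of each candidate $S$. (One small slip there: $G_2(3)$ is \emph{not} excluded in this proposition; the conclusion $|S|_p=|H|_p$ still holds for $H=G_2(3)$ only because its second prime-power degree $2^6$ is not a power of $p=3$, so the justification needs that extra remark rather than an exclusion.) Also, your worry about classical groups of rank comparable to $\log_q|H|$, to be handled via $t(H)$ and $\tilde b(H)$, is largely moot: the equality $|S|_p=|H|_p$ combined with the Table \ref{Ta3} bound already forces an absolute bound on the rank (e.g.\ $n\leq 8$ for $H=E_8(q)$), which is exactly how the paper proceeds; neither $t(H)$ nor $b(H)\leq\tilde b(H)$ is used in this proposition.

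The genuine gap is in the elimination of the candidates that survive the $p$-part and rank bounds. Your stated tools --- membership/divisibility of one small degree, $d_1$, $d_2$, $\tilde b(H)$, and Lemma \ref{Lem4}$(ii),(iii)$ --- do not close cases in which $S$ and $H$ have the same $p$-part, admissible unipotent $p$-parts, and nearly identical low-degree data. Concrete examples from the paper's proof: $S={}^2E_6(q)$ against $H=E_6(q)$; $S=S_{12}(q)$ or $O_{13}(q)$ against $H=E_6(q)$; $S=O_8^\epsilon(q^3)$ against $H={}^2E_6(q)$; $S=L_6^\epsilon(r)$ with $5b=12a$ against $H={}^2E_6(q)$. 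The paper disposes of all of these with the tool you never invoke: Lemma \ref{Lem4}$(iv)$ ($\pi(S)\subseteq\pi(H)$) combined with Zsigmondy primitive prime divisors $l_n(p)$ --- e.g.\ $l_{18}(q)$, $l_{10}(q)$, $l_9(q)$ divide $|S|$ but not $|H|$ in the cases above --- plus, in a handful of residual situations (such as $S=S_4(q^3)$ versus $H={}^3D_4(q)$), explicit comparison with known character tables. Your fallback claim that ``any genuine difference in the Weyl-group/unipotent-degree data produces a degree in one group not present in the other'' is the assertion to be proved, not an argument; to make it effective you must exhibit, for each surviving candidate, a specific degree of $S$ (typically one containing a cyclotomic factor $\Phi_m(q)$ with $m$ not dividing any exponent occurring in $|H|$) and show it does not divide $|H|$, which is precisely the primitive-prime-divisor mechanism. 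Without that idea the middle step of your plan does not terminate, so as written the proposal has a real gap rather than merely deferred bookkeeping.
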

\begin{proof} Assume that $S$ is a simple group of Lie type in
characteristic $p,$ defined over a field of size $r=p^b$ and $\textrm{cd}(S)\subseteq
\textrm{cd}(H).$

Observe first that by \cite[Theorem $1.1$]{Malle}, the only
nontrivial character degree of $H$ which is a power of $p$ is the
degree of the Steinberg character $St_H$ of $H$ of degree $|H|_p.$
As $S$ is also a simple group of Lie type in characteristic $p,$ it
follows that $|S|_p=|H|_p.$ We note that the upper bounds for the
$p$-part of the non-Steinberg characters of $F_4(q),
{}^2E_6(q),E_6(q),E_7(q)$ and $E_8(q)$ are taken from \cite{Lubweb}.

{\bf Case $1.$} $H\cong {}^2G_2(q^2),q^2=3^{2m+1},m\geq 1.$ We have $p=3$ and $|H|_p=3^{3(2m+1)}.$
The character degrees of $H$ are available in \cite{Ward}. Observe that if $\chi\in
\textrm{Irr}(H)$ with $3\mid \chi(1)$ and $\chi\neq St_H,$ then $\chi(1)_3\in \{3^m,3^{2m+1}\}.$

$(a)$ $S=L_n^\epsilon(r),r=p^b.$ We have $p=3$ and $bn(n-1)=6(2m+1).$ If $n=2$ then $r=q^6$ so that
$r-1=q^6-1\in \textrm{cd}(S)\subseteq \textrm{cd}({}^2G_2(q^2)),$ which is impossible. If $n=3$
then $r=q^2.$ If $S=L_3(q^2),$ then $r^3-1=q^6-1\in \textrm{cd}(S)\subseteq
\textrm{cd}({}^2G_2(q^2)),$ a contradiction. If $S=U_3(q^2),$ then it possesses an irreducible
character of degree $r(r-1)=q^2(q^2-1),$ hence $q^2(q^2-1)\in \textrm{cd}({}^2G_2(q^2)),$ a
contradiction. Hence $n\geq 4.$ By \cite[$13.8$]{car85}, $S$ possesses unipotent characters of
degrees $(r^n-\epsilon^{n-1}r)/(r-\epsilon)$ and
$r^2(r^{n}-\epsilon^n)(r^{n-3}-\epsilon^{n-3})/((r-\epsilon)(r^2-1))$ labeled by the symbols
$(1,n-1)$ and $(2,n-2),$ respectively. Observe that these degrees are not $p$-powers, so that
$r=3^m$ and $r^2=3^{2m+1}.$ As $r^2=3^{2m+1}=3\cdot (3^m)^2=3r^2,$ we have $1=3,$ a contradiction.

$(b)$ $S=S_{2n}(r),n\geq 2,S\neq S_4(2)$ or $S=O_{2n+1}(r),n\geq 3,r$ odd. We have $p=3$ and
$bn^2=3(2m+1).$ It follows that $n$ must be odd and so $n\geq 3.$ By \cite[$13.8$]{car85}, $S$
possesses a unipotent character labeled by the symbol $\binom{0\:1\:n}{\:-\:}$ with degree
$r(r^n-1)(r^{n-1}-1)/2(r+1).$ By Table \ref{Ta3}, $S$ also possesses a unipotent character $\psi$
with $\psi(1)_p=r^{(n-1)^2}.$ As $r$ is odd, we have $r=3^m$ and since $n\geq 3,$
$r^{(n-1)^2}=3^{2m+1}.$ As $r=3^m,$ we have $b=m$ and so $mn^2=3(2m+1).$ It follows that $m\mid 3$
so that $m=1$ or $m=3.$ If $m=1$ then $n=3$ but then $r^{(n-1)^2}=3^4\neq 3^3=3^{2m+1}.$ Thus $m=3$
and so $n^2=7,$ which is impossible.

$(c)$ $S=O_{2n}^\epsilon(r),n\geq 4.$ We have $p=3$ and
$bn(n-1)=3(2m+1).$ As $n\geq 4,$ $bn(n-1)$ is even but
$3(2m+1)$ is always odd. Hence this case cannot happen.

$(d)$ $S={}^2B_2(2^{2n+1})$ or $S={}^2F_4(2^{2n+1}).$ These cases
cannot happen since $S$ is of characteristic $2.$

$(e)$ $S={}^2G_2(3^{2n+1}).$ Since $3(2n+1)=3(2m+1),$ we have $m=n,$
 hence $S\cong H.$

$(f)$ $S\in \{{}^3D_4(r),G_2(r),F_4(r),E_6(r),{}^2E_6(r),E_8(r)\}.$
In these cases, we see that $|S|_3$ is an even power of $3$ while
$|H|_3=3^{3(2m+1)}.$ Thus these cases cannot happen.

$(g)$ $S=E_7(r).$ We have $63b=3(2m+1)$ hence $21b=2m+1$ and so $q^2=r^{21}.$ By
\cite[$13.9$]{car85}, $S$ possesses unipotent characters of degrees
$\phi_{7,1}(1)=r\Phi_7\Phi_{12}\Phi_{14}$ and
$\phi_{27,2}(1)=r^2\Phi_3^2\Phi_6^2\Phi_9\Phi_{12}\Phi_{18}.$ Hence $r=3^m$ and $r^2=3^{2m+1},$
which is impossible.

{\bf Case $2.$} $H={}^2F_4(q^2),q^2=2^{2m+1},m\geq 1.$ We have $p=2$ and $|H|_2=2^{12(2m+1)}.$
Using the list of character degrees of ${}^2F_4(q^2)$ in \cite{Lubweb}, we see that if $\chi\in
\textrm{Irr}(H)$ with $\chi(1)$ is even, then either $\chi(1)_2=2^m$ or $2^{2m+1}\mid \chi(1).$
Moreover $\chi(1)_2\leq 2^{13m+6}<q^{13}$ if $\chi\neq St_H.$

$(a)$ $S=L_n^\epsilon(r),n\geq 2.$ Then $bn(n-1)=24(2m+1).$ If $n=2,$ then $b=12(2m+1)$ so
$r=q^{24}$ and $S=L_2(q^{24}).$ We have $r+1=q^{24}+1\in \textrm{cd}(S)$ but $q^{24}+1\not\in
\textrm{cd}(H),$ a contradiction. Thus $n\geq 3.$ By Table \ref{Ta3}, $S$ possesses a unipotent
character $\psi$ with $\psi(1)_p=r^{(n-1)(n-2)/2}.$ It follows that $(n-1)(n-2)b/2<13(2m+1)/2.$
Multiplying both sides by $2n,$ we obtain $bn(n-1)(n-2)=24(2m+1)(n-2)<13n(2m+1).$ After
simplifying, we have $24(n-2)<13n$ and so $n\leq 4.$ If $n=3$ then $b=4(2m+1)$ so that $r=q^8.$
Hence $S=L_3^\epsilon(q^8).$ By \cite{Simp}, $S$ possesses a character of degree $q^{24}-\epsilon.$
However we see that ${}^2F_4(q^2)$ has no such degrees. Thus $n=4$ and so $S=L^\epsilon_4(r),$
where $r=q^4.$ By \cite[$13.8$]{car85}, $S$ possesses a unipotent character of degree
$r^2(r^{n}-\epsilon^n)(r^{n-3}-\epsilon^{n-3})/(r-\epsilon) (r^2-1)=r^2(r^2+1)=q^8(q^8+1).$ However
we can check that $q^8(q^8+1)\not\in \textrm{cd}({}^2F_4(q^2)).$

$(b)$ $S=S_{2n}(r),n\geq 2,S\neq S_4(2)$ or $S=O_{2n+1}(r),n\geq 3,r$ odd. As $p=2,$ $S=S_{2n}(r).$
Then $bn^2=12(2m+1).$ By Table \ref{Ta3}, $S$ possesses a unipotent character $\psi$ with
$\psi(1)_2=2^{b(n-1)^2-1}.$ It follows that $b(n-1)^2-1<13(2m+1)/2$ and so $b(n-1)^2\leq
13(2m+1)/2=13bn^2/24.$ Hence $24(n-1)^2\leq 13n^2.$ Solving this inequality, we obtain $2\leq n\leq
3.$ If $n=2$ then $r=q^6$ and $S$ possesses a character of degree $(r-1)(r^2+1)=(q^6-1)(q^{12}+1)$
by \cite{Eno}. However we can check that $H$ has no such degree. Assume that $n=3.$ Then
$3b=4(2m+1)$ and hence $6b=8(2m+1)>6$ so that $l_{8(2m+1)}(2)$ exists and since $l_{8(2m+1)}(2)\in
\pi(S),$ by Lemma \ref{Lem4}$(iv),$ we deduce that $l_{8(2m+1)}(2)\in \pi(H),$ which is impossible.

$(c)$ $S=O_{2n}^\epsilon(r),n\geq 4.$ We have $bn(n-1)=12(2m+1).$ By \cite[$13.8$]{car85}, $S$
possesses a unipotent character of degree $r(r^n-\epsilon)(r^{n-2}+\epsilon)/(r^2-1).$ If $r=2^m$
then $b=m.$ We have $mn(n-1)=12(2m+1)>24m$ so that $n(n-1)>24$ and hence $n\geq 6.$ Thus
$12(2m+1)=mn(n-1)\geq 30m.$ It follows that $1\leq m\leq 2.$ If $m=1$ then $n(n-1)=36.$ However
this equation has no integer solutions. Thus $m=2.$ Then $r=2^2$ and $n=6.$ Hence
$S=O_{12}^\epsilon(4)$ and $H={}^2F_4(2^5).$ But then $\pi(O_{12}^\epsilon(4))\not\subseteq
\pi({}^2F_4(32)).$ Therefore $2m+1\leq b.$ Thus $n(n-1)\leq 12$ and so as $n\geq 4,$ we deduce that
$n=4.$ Then $b=2m+1$ and hence $r=q^2>2.$ We have $S=O_8^\epsilon(2^{2m+1})$ and
$H={}^2F_4(2^{2m+1}).$ However $l_{3}(2^{2m+1})\in \pi(S)-\pi(H),$ which contradicts Lemma
\ref{Lem4}$(iv).$

$(d)$ $S={}^2B_2(r^2),r^2=2^{2n+1}.$ Then $2(2n+1)=12(2m+1)$ and so $r^2=q^{12}.$ By \cite{Suz},
$S$ possesses a character of degree $r^4+1=q^{24}+1.$ However $q^{24}+1\not\in \textrm{cd}(H).$

$(e)$ $S={}^2F_4(r^2),r^2=2^{2n+1}.$ Then $12(2n+1)=12(2m+1)$ hence
$m=n$ so that $S\cong H.$

$(f)$ $S={}^2G_2(3^{2n+1}).$ This case cannot happen as $S$ is of
characteristic $3.$

$(g)$ $S={}^3D_4(r).$ Then $12b=12(2m+1)$ and so $r=q^{2}.$ By Table \ref{Ta3}, $S$ possesses a
unipotent character $\psi$ with $\psi(1)_p=p^{7b}.$ As $b=2m+1,$ we see that $7b=14(2m+1)>13m+6,$ a
contradiction.

$(h)$ $S={}^2E_6(r)$ or $E_6(r).$ Then $36b=12(2m+1)$ and so $r^3=q^{2}.$ By Table \ref{Ta3}, $S$
possesses a unipotent character $\psi$ with $\psi(1)_p=r^{25}.$ As $r^3=q^2,$ we have
$r^{25}>r^{24}=(r^3)^8=q^{16}>q^{13},$ a contradiction.

$(i)$ $S=G_2(r).$ Then $6b=12(2m+1)$ and so $r=q^{4}.$ As $r$ is even, using \cite{Eno86}, $S$
possesses an irreducible character $\psi\in \textrm{Irr}(S)$ with $\psi(1)_2=r^{3}=q^{12}.$ However
this cannot happen by checking the list of character degrees of $H.$

$(j)$ $S=F_4(r).$ Then $24b=12(2m+1)$ and so $2b=2m+1,$ which is
impossible.


$(k)$ $S=E_7(r).$ Then $63b=12(2m+1)$ and so $21b=4(2m+1).$ By \cite[$13.9$]{car85}, $S$ possesses
a unipotent character of degree $\phi_{7,1}(1)=r\Phi_7\Phi_{12}\Phi_{14}.$ If $r=2^m$ then $b=m$ so
that $21m=4(2m+1)$ and hence $13m=4,$ a contradiction. Thus $2m+1\leq b.$ But then $21b=4(2m+1)\leq
4b,$ which is impossible.

$(l)$ $S=E_8(r).$ Then $120b=12(2m+1)$ and so $10b=(2m+1)$ or $r^{10}=q^2.$ By Table \ref{Ta3}, $S$
possesses a unipotent character $\psi$ with $\psi(1)_p=r^{91}.$ As $r^{10}=q^2,$ we have
$r^{91}>r^{90}=q^{18}>q^{13},$ a contradiction.

{\bf Case $3.$} $H={}^3D_4(q),q=p^a.$ Then $|H|_p=p^{12a}.$ The character degrees of $H$ are
available in \cite[Table $4.4$]{Der}. Observe that if $\chi\in \textrm{Irr}(H)$ with $\chi\neq
St_H$ and $p\mid \chi(1)$ then $q=\chi(1)_p$ or $q^3/(2,q)\mid \chi(1)_p.$ In any cases $q\mid
\chi(1)_p$ and $\chi(1)_p\leq q^7.$

$(a)$ $S=L_n^\epsilon(r),n\geq 2.$ Then $bn(n-1)=24a.$ If $n=2,$ then $b=12a$ hence $r=q^{12}.$ We
have $r+1=q^{12}+1\in \textrm{cd}(S)$ but $q^{12}+1\not\in \textrm{cd}(H),$ a contradiction. Thus
$n\geq 3.$ By Table \ref{Ta3}, $S$ possesses a unipotent character $\psi$ with
$\psi(1)_p=p^{b(n-1)(n-2)/2}.$ Thus $b(n-1)(n-2)/2\leq 7a.$ Multiplying both sides by $n,$ we
obtain $bn(n-1)(n-2)/2=12a(n-2)\leq 7an$ and so $5n\leq 24$ hence $n\leq 4.$ We conclude that
$3\leq n\leq 4.$ If $n=4$ then $r=q^2.$ By \cite[$13.8$]{car85}, $L_4^\epsilon(q^2)$ possesses a
unipotent character labeled by the symbol $(2,2)$ of degree $r^2(r^2+1)=q^4(q^4+1).$ However we can
check that $q^4(q^4+1)\not\in \textrm{cd}(H)$ since $q^4+1\nmid |H|,$ and so
$\textrm{cd}(S)\not\subseteq \textrm{cd}(H).$ If $n=3$ then $r=q^4$ and $S=L^\epsilon_3(q^4).$ By
\cite{Simp}, $S$ possesses a unipotent character of degree $r^3-\epsilon=q^{12}-\epsilon.$ However
we can see that $q^{12}-\epsilon\nmid |H|.$ Thus $\textrm{cd}(S)\not\subseteq \textrm{cd}(H).$

$(b)$ $S=S_{2n}(r),n\geq 2,S\neq S_4(2)$ or $S=O_{2n+1}(r),n\geq 3,r$ odd. We have $bn^2=12a.$ By
\cite[$13.8$]{car85}, $S$ possesses a unipotent character $\chi\in \textrm{Irr}(S)$ labeled by the
symbol $\binom{0\:1\:n}{\:-\:}$ with degree $\chi(1)=r(r^n-1)(r^{n-1}-1)/2(r+1).$

Assume at first that $p$ is odd. If $r=q$ then $b= a.$ It follows that $n^2=12,$ a contradiction.
Thus $q^3\mid \chi(1)_p=r$ and so $b\geq 3a.$ Hence $12a=bn^2\geq 3an^2$ so that $n^2\leq 4$ hence
$n=2.$ Thus $r=q^3$ and so $S=S_4(q^3)$ and $H={}^3D_4(q).$ However using \cite{Der, Sha}, we can
check that $\textrm{cd}(S_4(q^3))\nsubseteq \textrm{cd}({}^3D_4(q)).$

Now assume $p=2.$ Then $S=S_{2n}(r).$ If $r=2$ then $b=1$ so that $n^2=12a.$ By Table \ref{Ta3},
$S$ has a unipotent character $\psi$ with $\psi(1)_2=2^{(n-1)^2-1}.$ Thus $(n-1)^2-1\leq 7a.$
Combining this with $n^2=12a,$ we deduce that $12((n-1)^2-1)\leq 7n^2$ so that $2\leq n\leq 4.$
However for these values of $n,$ the equation $n^2=12a$ is impossible. Thus $r>2$ and so
$\chi(1)_2=2^{b-1}>1.$ If $b-1=a$ then $(a+1)n^2=12a.$ Hence $n^2<12$ so that $n=2$ or $n=3.$ If
$n=2$ then $4(a+1)=12a$ or $2a=1,$ which is impossible. Hence $n=3$ and $9(a+1)=12a,$ which implies
that $a=3.$ Thus $b=4$ and so $S=S_6(16)$ and $H={}^3D_4(8).$ But then $\pi(S)\nsubseteq \pi(H),$
which contradicts Lemma \ref{Lem4}$(iv).$ Thus $q^3/2=2^{3a-1}\mid 2^{b-1}.$ It follows that $b\geq
3a,$ hence $n^2\leq 4$ so that $n=2,$ $b=3a$ and so $r=q^3.$ This leads to a contradiction as in
the case when $p$ is odd.

$(c)$ $S=O_{2n}^\epsilon(r),n\geq 4.$ We have $bn(n-1)=12a.$ By \cite[$13.8$]{car85}, $S$ possesses
a unipotent character $\chi$ of degree $r(r^n-\epsilon)(r^{n-2}+\epsilon)/(r^2-1).$ If $r=q$ then
$a=b$ so that $n(n-1)=12$ or $n=4.$ We have $S=O_8^\epsilon(q)$ and $H={}^3D_4(q).$ We can check
that $\chi(1)\not\in \textrm{cd}(H).$ If $q^3/(2,q)\mid r$ then $b\geq 3a-1\geq 2a$ and so
$12a=bn(n-1)\geq 2an(n-1)$ so that $n(n-1)\leq 6,$ which is impossible as $n\geq 4.$

$(d)$ $S={}^2B_2(r^2),r^2=2^{2n+1}.$ Then $2(2n+1)=12a$ and so
$2n+1=6a,$ which is absurd.

$(e)$ $S={}^2F_4(r^2),r^2=2^{2n+1}.$ Then $12(2n+1)=12a$ and so $2n+1=a.$ By \cite[$13.9$]{car85},
$S$ possesses a unipotent character of degree ${}^2B_2[a](1)=r\Phi_1\Phi_2\Phi_4^2\Phi_6/\sqrt{2}.$
However this is impossible as $r/\sqrt{2}=2^n<q=r^2.$

$(f)$ $S={}^2G_2(r^2),r^2=3^{2n+1}.$ Then $3(2n+1)=12a$ which is
impossible.

$(g)$ $S={}^3D_4(r).$ Then $12b=12a$ and so $r=q.$ Hence $S\cong H.$

$(h)$ $S={}^2E_6(r)$ or $E_6(r).$ Then $36b=12a$ and so $r^3=q.$ By Table \ref{Ta3}, $S$ possesses
a unipotent character $\psi$ with $\psi(1)_p=r^{25}.$ As $r^3=q,$ we have
$r^{25}>r^{21}=(r^3)^7=q^7,$ a contradiction.

$(i)$ $S=G_2(r).$ Then $6b=12a$ and so $r=q^{2}.$ By \cite{Chang,Eno76, Eno86}, $S$ possesses an
irreducible character $\psi$ with $\psi(1)_2=r^{3}=q^{6}.$ However this cannot happen by checking
the list of character degrees of $H.$

$(j)$ $S=F_4(r).$ Then $24b=12a$ and so $q=r^2.$ By \cite[$13.9$]{car85}, $S$ possesses a unipotent
character of degree $\phi_{9,10}(1)=r^{10}\Phi_3^2\Phi_6^2\Phi_{12}.$ As $r^{10}=q^5,$ we can check
that this degree does not belong to $\textrm{cd}(H).$

$(k)$ $S=E_7(r).$ Then $63b=12a$ and so $21b=4a.$ By \cite[$13.9$]{car85}, $S$ possesses a
unipotent character of degree $\phi_{7,1}(1)=r\Phi_7\Phi_{12}\Phi_{14}.$ It follows that $b\geq a$
and so $4a=21b\geq 21a,$ which is impossible.

$(l)$ $S=E_8(r).$ Then $120b=12a$ and so $10b=a$ or $r^{10}=q.$ By Table \ref{Ta3}, $S$ possesses a
unipotent character $\psi$ with $\psi(1)_p=r^{91}.$ As $r^{10}=q,$ we have
$r^{91}>r^{90}=q^{9}>q^{7},$ a contradiction.

{\bf Case $4.$} $H={}^2E_6(q),q=p^a.$ Then $|H|_p=p^{36a}.$ If $\chi\in \textrm{Irr}(H)$ with
$\chi\neq St_H,$ then $\chi(1)_p\leq p^{25a}.$

$(a)$ $S=L_n^\epsilon(r),r=p^b, n\geq 2.$ Then $bn(n-1)=72a.$ If $n=2,$ then $b=36a$ hence
$r=q^{36}.$ We have $r+1=q^{36}+1\in \textrm{cd}(S)$ but $q^{36}+1\nmid |H|,$ a contradiction. Thus
$n\geq 3.$ By Table \ref{Ta3}, $S$ has a unipotent character $\psi\in \textrm{Irr}(S)$ with
$\psi(1)_p=p^{b(n-1)(n-2)/2}.$ It follows that $b(n-1)(n-2)\leq 50a.$ Multiplying both sides by
$n,$ we obtain $bn(n-1)(n-2)=72a(n-2)\leq 50an$ hence $11n\leq 72$ so that $n\leq 6.$ Assume first
that $l_{\epsilon bn}(p)$ exists. As $r^n-\epsilon^n\mid |S|,$ we deduce that $l_{\epsilon
bn}(p)\in \pi(H)$ by Lemma \ref{Lem4}$(iv).$ Thus $bn\leq 18a.$ Multiplying both sides by $n-1,$ we
obtain $bn(n-1)=72a\leq 18a(n-1),$ hence $n\geq 5.$ Thus $5\leq n\leq 6.$ If $n=5$ then $5b=18a.$
It follows that $4b=72a/5>12a$ so that $l_{4b}(p)\in \pi(S)\subseteq\pi(H)$ exists and hence
$4b\mid 18a=5b,$ which is impossible. If $n=6$ then $5b=12a.$ We also have $4b=48a/5>9a$ and so
arguing as above, we have $l_{4b}\in \pi(H).$ It follows that $4b\mid 18a$ or $4b\mid 12a=5b.$
Obviously the latter case cannot happen and so $4b\mid 18a$ and then $8b\mid 36a=15b,$ which is
impossible. Thus $l_{\epsilon bn}(p)$ does not exist. It follows that $bn=6$ or $2bn=6.$ In both
cases, we have $bn\leq 6$ so that $n-1=72a/bn\geq 72a/6=12a\geq 12,$ which is a contradiction as
$n\leq 6.$

$(b)$ $S=S_{2n}(r),n\geq 2,S\neq S_4(2)$ or $S=O_{2n+1}(r),n\geq 3,r$ odd. We have $bn^2=36a.$ If
$p=2$ and $2bn=6,$ then $bn=3$ so that $n=3,b=1$ and hence $9=36a,$ which is absurd. Thus
$l_{2bn}(p)$ exists and so $l_{2bn}(p)\in\pi(H)$ as $l_{2bn}(p)\in \pi(S).$ We deduce that $2bn\leq
18a$ or $bn\leq 9a.$ Multiplying both sides by $n,$ we obtain $bn^2=36a\leq 9an.$ Hence $n\geq 4.$
By Table \ref{Ta3}, $S$ has a unipotent character $\psi$ with $\psi(1)_p\geq p^{b(n-1)^2-b}$ and so
$bn(n-2)\leq 25a.$ Multiplying both sides by $n,$ we have $bn^2(n-2)=36a(n-2)\leq 25an.$ Thus
$4\leq n\leq 6.$ If $n=4$ then $4b=9a.$ We have $6b=27a/2>13a$ and so
$l_{6b}(p)\in\pi(S)\subseteq\pi(H)$ exists and hence $6b\mid 18a=8b,$ which is impossible.
Similarly, if $n=5$ then $25b=36a.$ We have $10b=360a/25=72a/5>12a$ and so $l_{10b}(p)\in \pi(H).$
Hence $10b\mid 18a,$ and so $20b\mid 36a=25b,$ a contradiction. If $n=6$ then $b=a$ so that $q=r.$
However we see that $l_5(q)\in \pi(S)$ but $l_5(q)\not\in \pi(H),$ which contradicts Lemma
\ref{Lem4}$(iv).$

$(c)$ $S=O_{2n}^\epsilon(r),n\geq 4.$ We have $bn(n-1)=36a.$ By Table \ref{Ta3}, $S$ has a
unipotent character $\psi$ with $\psi(1)_p\geq p^{b(n-1)(n-2)}.$ Thus $b(n-1)(n-2)\leq 25a.$
Multiplying both sides by $n$ and simplifying, we obtain $11n\leq 72$ and so $4\leq n\leq 6.$ If
$n=4$ then $b=3a$ and hence $r=q^3.$ We have $S=O^\epsilon_8(q^3)$ and $H={}^2E_6(q).$ We have
$l_9(q)\in \pi(S)$ but $l_9(q)\not\in\pi(H),$ which contradicts Lemma \ref{Lem4}$(iv).$ If $n=5$
then $5b=9a.$ As $8b=72a/5>12a,$ $l_{8b}(p)\in\pi(S)\subseteq\pi(H)$ exists and hence $8b\mid
18a=10b,$ which is impossible. Assume that $n=6.$ Then $5b=6a.$ As above, we have $l_{8b}(p)\in
\pi(H)$ and since $8b=48a/5>9a,$ we have $8b\mid 10a,$ $8b\mid 12a=10b$ or $8b\mid 18a=15b.$
Obviously the last two cases cannot happen. If $8b\mid 10a,$ then $24b\mid 30a=25b,$ which is again
impossible.

$(d)$ $S={}^2B_2(r^2),r^2=2^{2n+1}$ or ${}^2G_2(r^2),r^2=3^{2n+1}.$
Then equation $|S|_p=|H|_p$ cannot happen.

$(e)$ $S={}^2F_4(r^2),r^2=2^{2n+1}.$ Then $12(2n+1)=36a$ and so
$2n+1=3a$ or $r^2=q^3.$ We have $r^{12}+1=q^{18}+1\mid |S|$ so that
$l_{36}(q)\in \pi(H),$ which is impossible.

$(f)$ $S={}^3D_4(r).$ Then $12b=36a$ and so $r=q^3.$ We observe that
$q^{36}-1=r^{12}-1=(r^4-1)(r^8+r^4+1)$ and $r^8+r^4+1\mid |S|$ so
that $l_{36}(q)\mid |S|$ but $l_{36}(q)\not\in \pi(H),$ a
contradiction.

$(g)$ $S={}^2E_6(r).$ Then $36b=36a$ and so $r=q.$ Thus $S\cong H.$

$(h)$ $S=G_2(r).$ Then $6b=36a$ and so $r=q^{6}.$ We have
$r^6-1=q^{36}-1\mid |S|$ so that $l_{36}(q)\in \pi(H),$ which is
impossible.

$(i)$ $S=F_4(r).$ Then $24b=36a$ and so $q^3=r^2.$ We have
$r^{6}-1=q^{9}-1\mid |S|$ so that $l_9(q)\in \pi(H).$ However
$l_9(q)\nmid |H|,$ a contradiction.

$(j)$ $S=E_6(r).$ Then $36b=36a$ and so $r=q.$ We have $l_9(q)\in
\pi(S)$ but $l_9(q)\not\in \pi(H),$ a contradiction.

$(k)$ $S=E_7(r).$ Then $63b=36a$ and so $7b=4a.$ By Table \ref{Ta3},
$S$ has a unipotent character $\psi$ with $\psi(1)_p=r^{46}.$ As
$r^7=q^4,$ we have $r^{46}=q^{184/7}>q^{25},$ a contradiction.

$(l)$ $S=E_8(r).$ Then $120b=36a$ and so $20b=9a$ or $r^{20}=q^9.$
By Table \ref{Ta3}, $S$ has a unipotent character $\psi$ with
$\psi(1)_p=r^{91}.$ As $r^{20}=q^9,$ we have
$r^{91}=q^{819/20}>q^{25},$ a contradiction.

{\bf Case $5.$} $H=G_2(q),q=p^a.$ Then $|H|_p=p^{6a}.$ The character degrees of $G_2(q)$ are
available in \cite{Chang,Eno76, Eno86}. Observe that if $\chi\in \textrm{Irr}(H)$ with $\chi\neq
St_H$ and $p\mid \chi(1)$ then $q/(6,q)\mid \chi(1)_p$ and $\chi(1)_p\leq p^{3a}.$

$(a)$ $S=L_n^\epsilon(r),n\geq 2.$ Then $bn(n-1)=12a.$ If $n=2,$ then $b=6a$ hence $r=q^{6}.$ We
have $r+1=q^{6}+1\in \textrm{cd}(S)$ but $q^{6}+1\nmid |H|,$ a contradiction. Thus $n\geq 3.$ By
Table \ref{Ta3}, $S$ possesses a unipotent character $\psi\in \textrm{Irr}(S)$ with
$\psi(1)_p=p^{b(n-1)(n-2)/2}.$ It follows that $b(n-1)(n-2)\leq 6a.$ Multiplying both sides by $n$
and simplifying, we obtain $3\leq n\leq 4.$ If $n=4$ then $b=a$ so that $S=L_4^\epsilon(q)$ and
$H=G_2(q).$ However $l_4(q)\in \pi(S)$ but $l_4(q)\not\in\pi(H).$ If $n=3$ then $b=2a$ so that
$r=q^2.$ We have $S=L_3^\epsilon(q^2)$ and $S$ possesses a character of degree $q^2(q^2+\epsilon).$
However, we can check that this degree does not belong to $\textrm{cd}(H).$

$(b)$ $S=S_{2n}(r),n\geq 2,S\neq S_4(2)$ or $S=O_{2n+1}(r),n\geq 3,r$ odd. We have $bn^2=6a.$ By
Table \ref{Ta3}, $S$ possesses a unipotent character $\psi\in \textrm{Irr}(S)$ with $\psi(1)_p\geq
p^{bn(n-2)}.$ Thus $bn(n-2)\leq 3a.$ It follows that $2\leq n\leq 4.$ If $n=2$ then $r^2=q^3$ and
$S=S_4(r)$ possesses a unipotent character labeled by the symbol $\binom{1\:2}{\:0\:}$ with degree
$r(r^2+1)/2.$ Assume first that $p$ is odd. Since $r=q^{3/2}>q,$ we deduce from the list of
character degrees of $G_2(q)$ that $r=q^2$ or $r=q^3.$ But then $r\geq q^{2}>q^{3/2},$ a
contradiction. Thus $p=2.$ We have $r/2=\frac{1}{2}q^{3/2}\geq q.$ Thus $r/2\in \{q,q^2,q^3\}.$ If
$r=2q,$ then $q=4$ and so $S=S_4(8)$ and $H=G_2(4).$ However by using \cite{GAP},
$\textrm{cd}(S_4(8))\nsubseteq \textrm{cd}(G_2(4)).$ For the remaining cases, we have $r\geq 2q^2$
so that $r^2=q^3\geq 4q^4,$ which is impossible. If $n=3$ then $3b=2a.$ We have $l_{4b}(p)\in
\pi(S)$ so that $l_{4b}(p)\in \pi(H).$ As $4b>2a,$ we deduce that  $4b\mid 6a=9b,$ which is
impossible. If $n=4$ then $8b=3a.$ If $l_{6b}(p)$ exists then $l_{6b}(b)\in \pi(H).$ As
$6b=18a/8>2a,$ we have $6b\mid 6a=16b,$ which is impossible. Hence $6b=6$ and $p=2.$ Hence $b=1$
and $3a=8,$ a contradiction.

$(c)$ $S=O_{2n}^\epsilon(r),n\geq 4.$ We have $bn(n-1)=6a.$ By Table \ref{Ta3}, $S$ has a unipotent
character $\psi$ with $\psi(1)_p\geq p^{b(n-1)(n-2)}.$ Thus $b(n-1)(n-2)\leq 3a.$ Multiplying both
sides by $n$ and simplifying, we obtain $n\leq 4$ and so $n=4$ and $2b=a$ and hence $q=r^2.$ We
have $S=O^\epsilon_8(r)$ and $H=G_2(r^2).$ By \cite[$13.8$]{car85}, $S$ possesses a unipotent
character $\varphi$ of degree $r(r^4-\epsilon)(r^{2}+\epsilon)/(r^2-1).$ Thus $a-1\leq b$ so that
$a=2b\geq 2(a-1)$ hence $a\leq 2.$ Since $a=2b\geq 2,$ we obtain $a=2$ and $b=1.$ Then
$S=O_8^\epsilon(2)$ and $H=G_2(4).$ Using \cite{atlas} we can see that
$\textrm{cd}(O_8^\epsilon(2))\not\subseteq \textrm{cd}(G_2(4)).$

$(d)$ $S={}^2B_2(r^2),r^2=2^{2n+1}.$ Then $2(2n+1)=6a,$ hence $2n+1=3a.$ By \cite{Suz}, $S$
possesses a character of degree $r^4+1=q^6+1.$ But then $q^6+1\nmid |H|,$ a contradiction.

$(e)$ $S={}^2F_4(r^2),r^2=2^{2n+1}.$ Then $12(2n+1)=6a$ and so
$2(2n+1)=a.$ By Table \ref{Ta3}, $S$ has a unipotent character
$\psi$ with $\psi(1)_2=r^{13n+6}.$ Hence $13n+6\leq 3a=6(2n+1).$ It
follows that $13n+6\leq 12n+6,$  which is impossible.

$(f)$ $S={}^2G_2(r^2),r^2=3^{2n+1}.$ Then $3(2n+1)=6a,$ which is impossible.

$(g)$ $S={}^3D_4(r).$ Then $12b=6a$ and so $a=2b.$ By \cite[$13.9$]{car85}, $S$ possesses a
unipotent character of degree $\phi_{1,3}'(1)=r\Phi_{12}.$ Thus $b\geq a-1$ and hence $a=2b\geq
2(a-1)$ so that $a\leq 2.$ As $a=2b\geq 2,$ we have $a=2$ and $b=1.$ Then $S={}^3D_4(2)$ and
$H=G_2(4).$ Using \cite{atlas} we can see that $\textrm{cd}({}^3D_4(2))\not\subseteq
\textrm{cd}(G_2(4)).$

$(h)$ $S={}^2E_6(r).$ Then $36b=6a$ and so $a=6b.$ By \cite[$13.9$]{car85}, $S$ possesses a
unipotent character of degree $\phi_{2,4}'(1)=r\Phi_8\Phi_{12}.$ Thus $b\geq a-1$ and hence
$a=6b\geq 6(a-1)$ so that $a=1,$ which is impossible as  $a=6b\geq 6.$

$(i)$ $S=G_2(r).$ Then $6b=6a$ and so $r=q.$ Thus $S\cong H.$

$(j)$ $S=F_4(r).$ Then $24b=6a$ and so $a=4b.$ By \cite[$13.9$]{car85}, $S$ possesses a unipotent
character of degree $\phi_{9,2}(1)=r^2\Phi_{3}^2\Phi_6^2\Phi_{12}.$ Thus $2b\geq a-1$ and hence
$a=4b\geq 2(a-1)$ so that $a\leq 2,$ which is impossible as $a=4b\geq 4.$

$(k)$ $S=E_6(r).$ Then $36b=6a$ and so $a=6b.$ By \cite[$13.9$]{car85}, $S$ possesses a unipotent
character of degree $\phi_{6,1}(1)=r\Phi_{8}\Phi_{9}.$ Thus $b\geq a-1$ and hence $a=6b\geq 6(a-1)$
so that $a\leq 1,$ which is impossible as $a=6b\geq 6.$

$(l)$ $S=E_7(r).$ Then $63b=6a$ and so $2a=21b.$ By \cite[$13.9$]{car85}, $S$ possesses a unipotent
character of degree $\phi_{7,1}(1)=r\Phi_{7}\Phi_{12}\Phi(14).$ Thus $b\geq a-1$ and hence
$2a=21b\geq 21(a-1)$ so that $a\leq 1,$ which is impossible as $a=21b/2>10.$

$(m)$ $S=E_8(r).$ Then $120b=6a$ and so $a=20b.$ By \cite[$13.9$]{car85}, $S$ possesses a unipotent
character of degree $\phi_{8,1}(1)=r\Phi_4^2\Phi_{8}\Phi_{12}\Phi_{20}\Phi_{24}.$ Thus $b\geq a-1$
and hence $a=20b\geq 20(a-1)$ so that $a\leq 1,$ which is impossible as $a=20b\geq 20.$

{\bf Case $6.$} $H=F_4(q),q=p^a.$ Then $|H|_p=p^{24a}.$ If $\chi\in \textrm{Irr}(H)$ with $\chi\neq
St_H$ then $\chi(1)_p\leq p^{16a}.$

$(a)$ $S=L_n^\epsilon(r),n\geq 2.$ Then $bn(n-1)=48a.$ If $n=2,$ then $b=24a$ hence $r=q^{24}.$ We
have $r+1=q^{24}+1\in \textrm{cd}(S)$ but $q^{24}+1\nmid |H|,$ a contradiction. Thus $n\geq 3.$ By
Table \ref{Ta3}, $S$ possesses a unipotent character $\psi$ with $\psi(1)_p=p^{b(n-1)(n-2)/2}.$ It
follows that $b(n-1)(n-2)\leq 32a.$ Multiplying both sides by $n$ and simplifying, we obtain $3\leq
n\leq 6.$ Assume that $l_{\epsilon bn}(p)$ exists. Then $l_{\epsilon bn}(p)\in \pi(H)$ by Lemma
\ref{Lem4}$(iv).$ Thus $bn\leq 12a.$ Multiplying both sides by $n-1,$ we have $bn(n-1)=48a\leq
12a(n-1),$ and hence $n\geq 5.$ Thus $5\leq n\leq 6.$ If $n=5$ then $5b=12a.$ It follows that
$4b=48a/5>9a$ so that $l_{4b}(p)$ exists and since $l_{4b}(p)\in \pi(S),$ by Lemma
\ref{Lem4}$(iv),$ $l_{4b}(p)\in \pi(H)$ so that $4b\mid 12a=5b,$ which is impossible. If $n=6$ then
$5b=8a.$ As $6b=48a/5>9a,$ $l_{6b}$ exists and so as above $l_{6b}(p)\in\pi(H)$  It follows that
$6b\mid 12a$ and hence $12b\mid 24a=15b,$  which is impossible. Thus $l_{\epsilon bn}(p)$ does not
exist and $3\leq n\leq 6.$ It follows that  $bn=6$ or $2bn=6.$ In both cases we obtain $bn\leq 6$
and so $n-1=48a/bn\geq 48a/6=8a\geq 8,$ which is a contradiction as $n\leq 6.$

$(b)$ $S=S_{2n}(r),n\geq 2,S\neq S_4(2)$ or $S=O_{2n+1}(r),n\geq 3,r$ odd. We have $bn^2=24a.$ If
$2bn=6,$ then $bn=3$ so that $n=3,b=1$ and hence $9=24a,$ which is absurd. Thus
$l_{2bn}(p)\in\pi(S)\subseteq\pi(H)$ exists and so $2bn\leq 12a$ or equivalently $bn\leq 6a.$
Multiplying both sides by $n,$ we obtain $bn^2=24a\leq 6an.$ Hence $n\geq 4.$ By Table \ref{Ta3},
$S$ has a unipotent character $\psi$ with $\psi(1)_p\geq p^{bn(n-2)}$ and so $bn(n-2)\leq 16a.$
Multiplying both sides by $n,$ we have $bn^2(n-2)=24a(n-2)\leq 16an.$ Thus $4\leq n\leq 6.$ If
$n=4$ then $2b=3a.$ We have $6b=9a$ and so $l_{6b}(p)$ exists. Hence $6b\mid 12a=8b,$ which is
impossible. If $n=5$ then $25b=24a.$ We have $10b=240a/25=48a/5>9a$ and so arguing as above, we
have $10b\mid 12a.$ Hence $40b\mid 48a=50b,$ a contradiction. If $n=6$ then $3b=2a.$ We have
$10b=20a/3>6a$ so that $l_{10b}(a)$ exists and so $10b\mid 8a=12b$ or $10b\mid 12a=18b.$ However we
can see that both cases are impossible.

$(c)$ $S=O_{2n}^\epsilon(r),n\geq 4.$ We have $bn(n-1)=24a.$ By Table \ref{Ta3}, $S$ has a
unipotent character $\psi$ with $\psi(1)_p\geq p^{b(n-1)(n-2)}.$ Thus $b(n-1)(n-2)\leq 16a.$
Multiplying both sides by $n$ and simplifying, we obtain $3(n-2)\leq 2n$ and so $4\leq n\leq 6.$ If
$n=4$ then $b=2a$ and hence $r=q^2.$ We have $S=O^\epsilon_8(q^2)$ and $H=F_4(q).$ By
\cite[$13.8$]{car85}, $S$ possesses a unipotent character $\varphi$ of degree
$r(r^4-\epsilon)(r^{2}+\epsilon)/(r^2-1).$ However we can check that $\varphi(1)\nmid |H|.$ If
$n=5$ then $5b=6a.$ As $8b=48a/5>9a,$ $l_{8b}(p)\in \pi(S)$ so that $l_{8b}(p)\in \pi(H).$  We
deduce that $8b\mid 12a=10b,$ which is impossible. If $n=6$ then $5b=4a.$ As above, we have
$8b=32a/5>6a,$ so that $l_{8b}(p)\in \pi(H)$ and hence  $8b\mid 8a=10b$ or $8b\mid 12a=15b.$
Obviously both cases are impossible.

$(d)$ $S={}^2B_2(r^2),{}^2F_4(r^2)$ or ${}^2G_2(r^2),$ where $r^2=2^{2n+1},2^{2n+1}$ or $3^{2n+1},$
respectively. In these cases, the equation $|S|_p=|H|_p$ cannot happen.

$(e)$ $S={}^3D_4(r).$ Then $12b=24a$ and so $r=q^2.$ We observe that
$q^{24}-1=r^{12}-1=(r^4-1)(r^8+r^4+1)$ and $r^8+r^4+1\mid |S|$ so
that $l_{24}(q)\mid |S|$ but $l_{24}(q)\not\in \pi(H),$ which contradicts Lemma \ref{Lem4}$(iv).$

$(f)$ $S={}^2E_6(r)$ or $E_6(r).$ Then $36b=24a$ and so $3b=2a.$ By Table
\ref{Ta3}, $S$ has a unipotent character $\psi$ with
$\psi(1)_p=p^{25b}.$ Thus $25b\leq 16a.$ Hence $25b\leq 24b,$ a
contradiction.

$(g)$ $S=G_2(r).$ Then $6b=24a$ and so $r=q^{4}.$ We have
$r^6-1=q^{24}-1\mid |S|$ so that $l_{24}(q)\in \pi(H),$ which is
impossible.

$(h)$ $S=F_4(r).$ Then $24b=24a$ and so $q=r$ so that $S\cong H.$

 $(i)$ $S=E_7(r).$ Then
$63b=24a$ and so $21b=8a.$ By Table \ref{Ta3}, $S$ possesses a unipotent character $\psi$ with
$\psi(1)_p=r^{46}.$ As $r^{21}=q^8,$ we have $r^{46}=q^{8.46/24}>q^{16},$ a contradiction.

$(j)$ $S=E_8(r).$ Then $120b=24a$ and so $a=5b$ or $q=r^5.$  By Table \ref{Ta3}, $S$ possesses a
unipotent character $\psi$ with $\psi(1)_p=r^{91}.$ As $r^{5}=q,$ we have
$r^{91}>r^{90}=q^{18}>q^{16},$ a contradiction.

{\bf Case $7.$} $H=E_6(q),q=p^a.$ Then $|H|_p=p^{36a}.$ If $\chi\in \textrm{Irr}(H)$ with $\chi\neq
St_H,$ then  $\chi(1)_p\leq p^{25a}.$

$(a)$ $S=L_n^\epsilon(r),n\geq 2.$ Then $bn(n-1)=72a.$ If $n=2,$ then $b=36a$ hence $r=q^{36}.$ We
have $r+1=q^{36}+1\in \textrm{cd}(S)$ but $q^{36}+1\nmid |H|,$ a contradiction. Thus $n\geq 3.$ By
Table \ref{Ta3}, $S$ has a unipotent character $\psi\in \textrm{Irr}(S)$ with
$\psi(1)_p=p^{b(n-1)(n-2)/2}.$ It follows that $b(n-1)(n-2)\leq 50a.$ Multiplying both sides by $n$
and simplifying, we obtain $11n\leq 72$ so that $n\leq 6.$ Assume first that $l_{\epsilon bn}(p)$
exists. We deduce that $l_{\epsilon bn}(p)\in \pi(S)\subseteq\pi(H)$ by Lemma \ref{Lem4}$(iv).$
Thus $bn\leq 12a.$ Multiplying both sides by $n-1,$ we have $bn(n-1)=72a\leq 12a(n-1).$ Hence
$n\geq 7,$ a contradiction. Thus $l_{\epsilon bn}(p)$ does not exist. Arguing as in Case $4(a),$ we
get a contradiction.

$(b)$ $S=S_{2n}(r),n\geq 2,S\neq S_4(2)$ or $S=O_{2n+1}(r),n\geq 3,r$ odd. We have $bn^2=36a.$ If
$p=2$ and $2bn=6,$ then $bn=3$ so that $n=3,b=1$ and hence $9=36a,$ which is absurd. Thus
$l_{2bn}(p)\in \pi(S)\subseteq\pi(H)$ exists and so $2bn\leq 12a$ or equivalently $bn\leq 6a.$
Multiplying both sides by $n$ and simplifying, we obtain $n\geq 6.$ By Table \ref{Ta3}, $S$ has a
unipotent character $\psi$ with $\psi(1)_p\geq p^{b(n-1)^2-b}$ and so $bn(n-2)\leq 25a.$
Multiplying both sides by $n$ and simplifying, we obtain $n\leq 6.$ Thus $n=6$ and then $b=a$ so
that $q=r.$ However $l_{10}(q)\in \pi(S)- \pi(H),$ contradicting Lemma \ref{Lem4}$(iv).$

$(c)$ $S=O_{2n}^\epsilon(r),n\geq 4.$ We have $bn(n-1)=36a.$ By Table \ref{Ta3}, $S$ has a
unipotent character $\psi$ with $\psi(1)_p\geq p^{b(n-1)(n-2)}.$ Thus $b(n-1)(n-2)\leq 25a.$
Multiplying both sides by $n$ and simplifying, we obtain $11n\leq 72$ and so $4\leq n\leq 6.$ If
$2(n-1)b=6$ then $b=1$ and $n=4$ so that $36a=12,$ which is impossible. Thus
$l_{2(n-1)b}(p)\in\pi(S)\subseteq\pi(H)$ exists and hence $2b(n-1)\leq 12a.$ Multiplying both sides
by $n$ and simplifying, we obtain $n\geq 6.$ Therefore $n=6$ and so $5b=6a.$ As $8b=48a/5>9a,$
$l_{8b}(p)$ exists and lies in $\pi(S)$ so that by Lemma \ref{Lem4}$(iv),$ $l_{8b}(p)\in \pi(H).$
Since $8b>9a,$ we must have  $8b\mid 12a=10b,$ which is impossible.

$(d)$ $S={}^2E_6(r).$ Then $36b=36a$ and so $r=q.$ Thus
$S={}^2E_6(q)$ and $H=E_6(q).$ However we have $l_{18}(q)\in
\pi(S)$ but $l_{18}(q)\not\in \pi(H),$ which contradicts Lemma  \ref{Lem4}$(iv).$

$(e)$ $S=E_6(r).$ Then $36b=36a$ and so $r=q.$ Hence $S\cong H.$

For the remaining simple exceptional groups of Lie type, using the same argument as in Case $4,$ we
deduce that $\textrm{cd}(S)\not\subseteq \textrm{cd}(H)$ in any of these cases.

{\bf Case $8.$} $H=E_7(q),q=p^a.$ Then $|H|_p=p^{63a}.$ If $\chi\in \textrm{Irr}(H)$ with $\chi\neq
St_H,$ then $\chi(1)_p\leq p^{46a}.$

$(a)$ $S=L_n^\epsilon(r),r=p^b.$ Then $bn(n-1)=126a.$ If $n=2,$ then $b=63a$ hence $r=q^{63}.$ We
have $r+1=q^{63}+1\in \textrm{cd}(S)$ but $q^{63}+1\nmid |H|,$ a contradiction. Thus $n\geq 3.$ By
Table \ref{Ta3}, $S$ has a unipotent character $\psi\in \textrm{Irr}(S)$ with
$\psi(1)_p=p^{b(n-1)(n-2)/2}.$ It follows that $b(n-1)(n-2)\leq 92a.$ Multiplying both sides by $n$
and simplifying, we obtain $n\leq 7.$ Assume first that $l_{\epsilon bn}(p)$ exists. We deduce that
$l_{\epsilon bn}(p)\in \pi(H)$ by Lemma \ref{Lem4}$(iv).$ Thus $bn\leq 18a.$ Multiplying both sides
by $n-1$ and simplifying, we obtain $n\geq 8,$ a contradiction. Thus $l_{\epsilon bn}(p)$ does not
exist. It follows that  $bn=6$ or $bn=3.$ In both cases, we have $bn\leq 6$ so that
$n-1=126a/bn\geq 126a/6=21a\geq 21,$ a contradiction as $n\leq 7.$

$(b)$ $S=S_{2n}(r),n\geq 2,S\neq S_4(2)$ or $S=O_{2n+1}(r),n\geq 3,r$ odd. We have $bn^2=63a.$ If
$p=2$ and $2bn=6,$ then $bn=3$ so that $n=3,b=1$ and hence $9=63a,$ which is impossible. Thus
$l_{2bn}(p)\in\pi(S)\subseteq \pi(H)$ exists and hence $2bn\leq 18a$ so that $bn\leq 9a.$
Multiplying both sides by $n$ and simplifying, we obtain $n\geq 7.$ By Table \ref{Ta3}, $S$ has a
unipotent character $\psi$ with $\psi(1)_p\geq p^{b(n-1)^2-b}$ and so $bn(n-2)\leq 46a.$
Multiplying both sides by $n$ and simplifying, we obtain $n\leq 7.$ Thus $n=7$ and then $7b=9a.$ We
see that $12b=108a/7>15a$ so that $l_{12b}(p)\in \pi(S)\subseteq\pi(H)$ exists and hence $12b\mid
18a=14b,$ which is impossible.

$(c)$ $S=O_{2n}^\epsilon(r),n\geq 4.$ We have $bn(n-1)=63a.$ By Table \ref{Ta3}, $S$ has a
unipotent character $\psi$ with $\psi(1)_p\geq p^{b(n-1)(n-2)}.$ Thus $b(n-1)(n-2)\leq 46a.$
Multiplying both sides by $n$ and simplifying, we obtain $17n\leq 126$ and so $4\leq n\leq 7.$ If
$2(n-1)b=6$ then $b=1$ and $n=4$ so that $63a=12$ which is impossible. Thus $l_{2(n-1)b}(p)\in
\pi(S)\subseteq \pi(H)$ exists and hence $2b(n-1)\leq 18a.$ Multiplying both sides by $n$ and
simplifying, we obtain $n\geq 7.$ Therefore $n=7$ and so $2b=3a.$ We have $10b=15a$ so that
$l_{10b}(p)\in\pi(S)\subseteq\pi(H)$ exists so that $10b\mid 18a=12b,$ which is impossible.

$(d)$ $S={}^2B_2(r^2).$ Then  $r^4=q^{63}.$ By \cite{Suz}, $S$ possesses a character of degree
$r^4+1=q^{63}+1.$ But then $q^{63}+1\nmid |H|,$ a contradiction.

$(e)$ $S={}^2F_4(r^2),r^2=2^{2n+1}.$ Then $12(2n+1)=63a.$  We have $l_{12(2n+1)}(2)\in \pi(S)$
so that by Lemma \ref{Lem4}, $l_{12(2n+1)}(2)\in \pi(H).$ We have
$12(2n+1)=63a$ and so $l_{63a}(2)\in \pi(H),$ and then $l_{63}(q)\in
\pi(H),$ which is impossible.

$(f)$ $S={}^2G_2(r^2),r^2=3^{2n+1}.$ Then $3(2n+1)=63a$ so that
$2n+1=21a.$  We have $l_{6(2n+1)}(3)\in \pi(S)$ so
that $l_ {6(2n+1)}(3)\in\pi(H).$ As $6(2n+1)=126a,$ we deduce that
$l_{126a}(3)\in \pi(H),$ which is impossible.

$(g)$ $S={}^3D_4(r).$ Then $12b=63a$ and so $4b=21a$ or
$r^4=q^{21}.$ We observe that $q^{36}-1=r^{12}-1=(r^4-1)(r^8+r^4+1)$
and $r^8+r^4+1\mid |S|$ so that $l_{36}(q)\mid |S|$ but
$l_{36}(q)\not\in \pi(H),$ which contradicts Lemma \ref{Lem4}$(iv).$

$(h)$ $S={}^2E_6(r).$ Then $36b=63a$ and so $4b=7a.$ We have
$12b=21a$ and $p^{12b}-1\mid |S|$ so that $l_{21}(q)\in \pi(H),$ which is impossible as $21a>18a.$

$(i)$ $S=G_2(r).$ Then $6b=63a$ and so $2b=21a.$ We have
$r^6-1=q^{63}-1\mid |S|$ so that $l_{63}(q)\in \pi(H),$ which is
impossible.

$(j)$ $S=F_4(r).$ Then $24b=63a$ and so $8b=21a.$ As
$p^{8b}-1=q^{21}-1\mid |S|,$ we deduce that $l_{21}(q)\in \pi(H),$
which is impossible.

$(k)$ $S=E_6(r).$ Then $36b=63a$ and so $4b=7a.$ We have
$p^{12b}-1=q^{21}-1\mid |S|$ so that $l_{21}(q)\in \pi(H),$ a
contradiction.

$(l)$ $S=E_7(r).$ Then $63b=63a$ and so $b=a$ or $q=r.$ Hence
$S\cong H.$

$(m)$ $S=E_8(r).$ Then $120b=63a$ and so $40b=21a.$ As $l_{30b}(p)$
exists and belongs to $\pi(S),$ it follows that $l_{30b}(p)\in
\pi(H).$ As $30b=63a/4>15a,$ we deduce that $30b\mid 18a$ and hence
$120b=63a\mid 72a,$ which is impossible.

{\bf Case $9.$} $H=E_8(q),q=p^a.$ Then $|H|_p=p^{120a}.$ If $\chi\in \textrm{Irr}(H)$ with
$\chi\neq St_H,$ then $\chi(1)_p\leq p^{91a}.$

$(a)$ $S=L_n^\epsilon(r),r=p^b.$ Then $bn(n-1)=240a.$ If $n=2,$ then $b=120a$ hence $r=q^{120}.$ We
have $r+1=q^{120}+1\in \textrm{cd}(S)$ but $q^{120}+1\nmid |H|,$ a contradiction. Thus $n\geq 3.$
By Table \ref{Ta3}, $S$ has a unipotent character $\psi\in \textrm{Irr}(S)$ with
$\psi(1)_p=p^{b(n-1)(n-2)/2}.$ It follows that $b(n-1)(n-2)\leq 182a.$ Multiplying both sides by
$n$ and simplifying, we obtain $n\leq 8.$ Assume first that $l_{\epsilon bn}(p)$ exists. We deduce
that $l_{\epsilon bn}(p)\in \pi(H)$ by Lemma \ref{Lem4}$(iv).$ Thus $bn\leq 30a.$ Multiplying both
sides by $n-1,$ we have $bn(n-1)=240a\leq 30a(n-1).$ Hence $n\geq 9,$ which contradicts the
assertion $n\leq 8$ above. Thus $l_{\epsilon bn}(p)$ does not exist. It follows that  $bn=6$ or
$bn=3,$ and hence $bn\leq 6$ so that $n-1=240a/bn\geq 240a/6=40a\geq 40,$ a contradiction.

$(b)$ $S=S_{2n}(r),n\geq 2,S\neq S_4(2)$ or $S=O_{2n+1}(r),n\geq 3,r$ odd. We have $bn^2=120a.$ If
$p=2$ and $2bn=6,$ then $bn=3$ so that $n=3,b=1$ and hence $9=120a,$ which is impossible. Thus
$l_{2bn}(p)\in\pi(S)\subseteq\pi(H)$ exists and hence $2bn\leq 30a$ so that $bn\leq 15a.$
Multiplying both sides by $n,$ we obtain $bn^2=120a\leq 15an.$ Hence $n\geq 8.$ By Table \ref{Ta3},
$S$ has a unipotent character $\psi$ with $\psi(1)_p\geq p^{b(n-1)^2-b}$ and so $bn(n-2)\leq 91a.$
Multiplying both sides by $n,$ we have $bn^2(n-2)=120a(n-2)\leq 91an.$ Thus $29n\leq 240$ and so
$n\leq 8.$ Hence $n=8$ and then $8b=15a.$ We see that $14b=105a/4>26a$ so that $l_{14b}(p)$ exists
and $l_{14b}(p)\in \pi(H)$ as $l_{14b}(p)\in \pi(S).$ It follows that $14b\mid 30a=16b,$ which is
impossible.

$(c)$ $S=O_{2n}^\epsilon(r),n\geq 4.$ We have $bn(n-1)=120a.$ By Table \ref{Ta3}, $S$ has a
unipotent character $\psi$ with $\psi(1)_p\geq p^{b(n-1)(n-2)}.$ Thus $b(n-1)(n-2)\leq 91a.$
Multiplying both sides by $n$ and simplifying, we obtain $29n\leq 240$ and so $4\leq n\leq 8.$ If
$2(n-1)b=6$ then $b=1$ and $n=4$ so that $120a=12$ which is impossible. Thus
$l_{2(n-1)b}(p)\in\pi(S)\subseteq\pi(H)$ exists and hence $2b(n-1)\leq 30a.$ Multiplying both sides
by $n$ and simplifying, we obtain $n\geq 8.$ Therefore $n=8$ and so $7b=15a.$ We have
$12b=180a/7>25a$ so that $l_{12b}(p)\in\pi(S)\subseteq\pi(H)$ exists and then $12b\mid 30a=14b,$
which is impossible.

$(d)$ $S={}^2B_2(r^2),{}^2F_4(r^2)$ or ${}^2G_2(r^2),$ where $r^2=2^{2n+1},2^{2n+1}$ or $3^{2n+1},$
respectively. In these cases, the equation $|S|_p=|H|_p$ cannot happen.

$(e)$ $S={}^3D_4(r).$ Then $12b=120a$ and so $b=10a$ or $r=q^{10}.$
We have $r^{6}-1=q^{60}-1\mid |S|$ so that $l_{60}(q)\in\pi(H),$
which is impossible.

$(f)$ $S={}^2E_6(r)$ or $E_6(r).$ Then $36b=120a$ and so $3b=10a.$
We have $12b=40a$ and $p^{12b}-1=q^{40}-1\mid |S|$ so that
$l_{40}(q)\in\pi(H),$ which is impossible as $40a>30a.$

$(g)$ $S=G_2(r).$ Then $6b=120a$ and so $b=20a.$ We have
$r^6-1=q^{120}-1\mid |S|$ so that $l_{120}(q)\in\pi(H),$ which is
impossible.

$(h)$ $S=F_4(r).$ Then $24b=120a$ and so $b=5a.$ As
$p^{8b}-1=q^{40}-1\mid |S|,$ we deduce that $l_{40}(q)\in\pi(H),$
which is impossible.

$(i)$ $S=E_7(r).$ Then $63b=120a$ and so $21b=40a.$ We have
$18b=240a/7>30a$ so that $l_{18b}(p)$ exists and hence
$l_{18b}(p)\in \pi(H)$ so that $18b\leq 30a,$ a contradiction.

$(j)$ $S=E_8(r).$ Then $120b=120a$ and so $b=a.$ Hence $S\cong H.$

\end{proof}
{\bf Proof of Theorem \ref{main}.} This follows from Propositions
\ref{prop1}-\ref{prop5}. $\hfill\square$

{\bf Proof of Corollary \ref{main1}.} Assume that $G$ is a perfect group satisfying
$\textrm{cd}(G)\subseteq \textrm{cd}(H)$ and $|G|\leq |H|.$ Let $N$ be a maximal normal subgroup of
$G.$ Then $G/N$ is a non-abelian simple group. As $\textrm{cd}(G/N)\subseteq \textrm{cd}(G)$ and
$\textrm{cd}(G)\subseteq \textrm{cd}(H),$ it follows that $\textrm{cd}(G/N)\subseteq
\textrm{cd}(H).$ By Theorem \ref{main}, we obtain $G/N\cong H.$ Hence $|G/N|=|H|.$ Since $|G|\leq
|H|,$ we conclude that $|N|=1.$ Hence $N$ is a trivial subgroup of $G,$ so that $G\cong H.$ The
proof is now complete. $\hfill\square$

{\bf Proof of Corollary \ref{main2}.} Assume that $G$ is a group satisfying
$\textrm{X}_1(G)\subseteq \textrm{X}_1(H).$ It follows that $|G|\leq |H|$ and
$\textrm{cd}(G)\subseteq \textrm{cd}(H).$ Moreover, as $H$ is non-abelian simple, $H$ has a unique
irreducible complex character of degree $1,$ which is the trivial character of $H$ and as
$\textrm{X}_1(G)\subseteq \textrm{X}_1(H),$ $G$ also possesses a unique character of degree $1.$
Thus $G$ must be
 perfect. Now the result follows from Corollary \ref{main1}.
$\hfill\square$

{\bf Proof of Corollary \ref{main3}.} Assume that $\C G\cong \C H.$ By Molien's Theorem (see
\cite[Theorem $2.13$]{Ber1}), the first columns of the ordinary character tables of $G$ and $H$
coincide so that $\textrm{X}_1(G)=\textrm{X}_1(H).$ Hence the result follows from  Corollary
\ref{main2}. $\hfill\square$

\subsection*{Acknowledgment} The author is grateful to the referee
for his or her comments.

\end{document}